\documentclass[a4paper, 10pt]{amsart}
\usepackage{graphicx}
\usepackage{cite}
\usepackage{amsthm,amsmath,amssymb}
\usepackage{mathrsfs}

% THEOREMS -------------------------------------------------------
\newtheorem{theorem}{Theorem}[section]

\newtheorem{lemma}[theorem]{Lemma}
\newtheorem{proposition}[theorem]{Proposition}
\newtheorem{definition}[theorem]{Definition}

\numberwithin{equation}{section}

% THEOREMS -------------------------------------------------------

\begin{document}

\title[]{Vanishing, Unbounded and Angular Shifts on the Quotient of the Difference and the Derivative of a Meromorphic Function}

\thanks{The second author gratefully acknowledges the support of the China Scholarship Council (\#202206820014).}

\author[L. Asikainen]{Lasse Asikainen}
\address[Lasse Asikainen]{Department of Physics and Mathematics, University of Eastern Finland, P.O. Box 111, FI-80101 Joensuu, Finland}
\email{lasseasi@student.uef.fi}

\author[Y. Chen]{Yu Chen$^{*}$}
\address[Yu Chen]{Department of Physics and Mathematics, University of Eastern Finland, P.O. Box 111, FI-80101 Joensuu, Finland}
\email[Corresponding author]{yuchen@student.uef.fi}

\author[R. J. Korhonen]{Risto Korhonen}
\address[Risto Korhonen]{Department of Physics and Mathematics, University of Eastern Finland, P.O. Box 111, FI-80101 Joensuu, Finland}
\email{risto.korhonen@uef.fi}

\date{}

\subjclass[2010]{Primary 30D35}

\keywords{Meromorphic functions; variable shift; Nevanlinna theory; hyper-order; finite order}

\begin{abstract}
We show that for a vanishing period difference operator of a meromorphic function \( f \), there exist the following estimates regarding proximity functions,
\[
\lim_{\eta \to 0} m_\eta\left(r, \frac{\Delta_\eta f - a\eta}{f' - a} \right) = 0
\]
and
\[
\lim_{r \to \infty} m_\eta\left(r, \frac{\Delta_\eta f - a\eta}{f' - a} \right) = 0,
\]
where \( \Delta_\eta f = f(z + \eta) - f(z) \), and \( |\eta| \) is less than an arbitrarily small quantity \( \alpha(r) \) in the second limit. Then, under certain assumptions on the growth, restrictions on the period tending to infinity, and on the value distribution of a meromorphic function \( f(z) \), we have
\[
m\left(r, \frac{\Delta_\omega f - a\omega}{f' - a} \right) = S(r, f'),
\]
as \( r \to \infty \), outside an exceptional set of finite logarithmic measure.

Additionally, we provide an estimate for the angular shift under certain conditions on the shift and the growth. That is, the following Nevanlinna proximity function satisfies
\[
m\left(r, \frac{f(e^{i\omega(r)}z) - f(z)}{f'} \right) = S(r, f),
\]
outside an exceptional set of finite logarithmic measure.

Furthermore, the above estimates yield additional applications, including deficiency relations between \( \Delta_\eta f \) (or \( \Delta_\omega f \)) and \( f' \), as well as connections between \( \eta/\omega \)-separated pair indices and \( \delta(0, f') \).
\end{abstract}
\maketitle

\section{Introduction and main results}

The relationship between the value distribution properties of the derivative $f^\prime$ and those of the finite difference $\Delta_c f(z)=f(z+c)-f(z)$, where $c\in\mathbb{C}\setminus\{0\}$, is a subject of considerable interest. This connection is underscored by the fact that $\frac{\Delta_cf}{c}\to f^\prime$ pointwise as $c\to0$. In 2007, Bergweiler and Langley \cite{MR2296397} investigated the behaviour of the finite difference operator, demonstrating that for a fixed $c\in\mathbb{C}\setminus\{0\}$, $\frac{\Delta_cf}{c}\to f^\prime$ as $|z|\to\infty$, except on a set of points with density approaching zero, if $f$ is a meromorphic function of order less than $1$. This result highlights a deep connection between the two operators.

Subsequent studies have focused on uncovering weaker but more general links between $f^\prime$ and $\Delta_cf$, applicable to broader classes of meromorphic functions. In this context, significant attention has been given to the difference quotient $f(z + c)/f(z)$. For example, Chiang and Feng \cite{MR2491899} established an asymptotic relationship between the difference quotient and the logarithmic derivative for meromorphic functions of finite order, providing further insight into the interplay between these operators.

In 2023, Asikainen, Huusko, and Korhonen \cite{asikainen2023new} established a result relating the value distributions of the derivative \( f' \) and the difference \( \Delta_c f = f(z + c) - f(z) \) of a non-\( c \)-periodic meromorphic function \( f \) with hyper-order \( \xi < \frac{3}{4} \). In particular, they showed that the following proximity function satisfies
\begin{equation}
\label{eq:asikainenetal_mainthm}
m\left(r, \frac{\Delta_c f - ac}{f' - a} \right) = S(r, f),
\end{equation}
where \( a \in \mathbb{C} \) is arbitrary, and \( S(r, f) \) denotes a quantity small relative to \( f \), with the estimate holding outside an exceptional set of finite logarithmic measure.

The above result only concerns a fixed shift $c\in\mathbb{C}$. In light of the results by Chiang and Ruijsenaars \cite{MR2220338}, who demonstrated that for a non-zero meromorphic function $f(z)$ and $c\in\mathbb{C}$, the following inequality holds:
$$
m(r,f(z+c))<\frac{R+2r}{R-2r}m(R,f)+\sum_{l=0}^L\frac{1}{2\pi}\int_0^{2\pi}\log\left|\frac{R^2-\bar{b}_l(re^{i\phi}+c)}{R(re^{i\phi}+c-b_l)}\right|{\rm d}\phi,
$$
where $|c|<r$, and $b_0,\dots,b_L$ are poles of $f (z)$ in $|z|<R$, a uniform bound can be derived:
$$
m(r,f(z+c))\le5m(3r,f)+n(4r,f)\cdot\log4
$$
whenever $|c|<r$. This raises the natural question of whether the results obtained by Asikainen et al. can be extended to the case of variable shift.

In 2017, Chiang and Luo \cite{MR3695346} advanced the field of difference Nevanlinna theory by developing frameworks for two distinct scenarios: meromorphic functions with steps approaching zero (vanishing period) and finite-order meromorphic functions with steps tending to infinity (infinite period). Their work extended the traditional concept of a difference operator with a fixed step to accommodate operators with varying steps, enabling the analysis of a broader class of functions and behaviours.

We generalize the fixed-step difference operator by introducing a varying-step difference operator, building upon the framework established by Asikainen, Huusko, and Korhonen \cite{asikainen2023new}. We now turn our attention to the vanishing and unbounded shifts, which offer an alternative extension of Theorem 2.2 from \cite{asikainen2023new} and equation~(\ref{eq:asikainenetal_mainthm}). We begin by considering the case of a vanishing shift.

\begin{theorem}
    \label{t1.1}
    Let $f(z)$ be a non-constant meromorphic function in $\mathbb{C}$, $a\in\mathbb{C}$ and $r=|z|$ be fixed. Then we have
    $$
    \lim_{\eta\to0}m_\eta\left(r,\frac{\Delta_\eta f-a\eta}{f'-a}\right)=0.
    $$
    Moreover, if $|\eta|<\alpha(r)$, where $\alpha(r)$ is an arbitrary term such that $\alpha(r)\to0$ as $r\to\infty$, then
    $$
    \lim_{r\to\infty}m_\eta\left(r,\frac{\Delta_\eta f-a\eta}{f'-a}\right)=0.
    $$
\end{theorem}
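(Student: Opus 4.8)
The plan is to estimate the proximity function by splitting the quotient multiplicatively and reducing everything to a single auxiliary function whose logarithm is controllable as $\eta\to 0$. Write
\[
\frac{\Delta_\eta f - a\eta}{f' - a} = \frac{\Delta_\eta f - a\eta}{\eta}\cdot\frac{\eta}{f'-a} = \Bigl(\frac{\Delta_\eta f}{\eta} - a\Bigr)\cdot\frac{1}{f'-a}\cdot\eta .
\]
The point is that $g:=\frac{\Delta_\eta f}{\eta}-a = \frac{f(z+\eta)-f(z)-a\eta}{\eta}$ converges to $f'-a$ pointwise (and, more usefully, uniformly on the circle $|z|=r$) as $\eta\to 0$, so that $\log^+\bigl|\frac{g}{f'-a}\bigr|$ is small except near the zeros of $f'-a$ and the poles of $f$, and near such bad points the extra factor $|\eta|$ (together with a standard lemma-on-the-logarithmic-derivative–type estimate) kills the contribution. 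Concretely I would write $\log^+\bigl|\frac{\Delta_\eta f-a\eta}{f'-a}\bigr|\le \log^+|g-(f'-a)| + \log^+\frac{1}{|f'-a|} + \text{(bounded terms)}$ after inserting and removing $f'-a$, where the middle term is handled by the usual $m\bigl(r,\tfrac{1}{f'-a}\bigr)\le T(r,f'-a)+O(1)$, which is the price we pay and which is absorbed because everything is multiplied down by $|\eta|$ in the definition of $m_\eta$ — this is where the precise definition of $m_\eta$ from Chiang--Luo's vanishing-period framework enters, and I would use exactly the normalization under which $m_\eta(r,h)\le \tfrac{1}{2\pi}\int_0^{2\pi}\log^+|h(re^{i\theta})|\,d\theta$ behaves like an $O(|\eta|)$-weighted average.

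For the first limit ($r$ fixed, $\eta\to 0$): on the compact circle $|z|=r$, avoiding the finitely many poles of $f$ and zeros of $f'-a$ inside a slightly larger disk, Taylor's theorem gives $f(z+\eta)-f(z)-a\eta = \eta(f'(z)-a) + O(\eta^2)$ uniformly, hence $\bigl|\frac{\Delta_\eta f-a\eta}{f'-a}\bigr| = |\eta|\cdot|1+O(\eta)|$, so $\log^+$ of it is $O(|\eta|)$ there. On the small arcs near the bad points one uses the explicit local expansions: near a pole of $f$ of multiplicity $k$, both numerator and denominator blow up with comparable rate, and near a zero of $f'-a$ the numerator still behaves like $\eta(f'-a)+O(\eta^2)$ so the quotient is again $O(|\eta|)$ after the $\eta/(f'-a)$ rearrangement; the measure of these arcs is bounded independently of $\eta$ while the integrand is integrable, so by dominated convergence the contribution tends to $0$. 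Assembling, $m_\eta(r,\cdot)\to 0$.

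For the second limit ($\eta\to 0$ as $r\to\infty$, $|\eta|<\alpha(r)$): here I cannot use compactness, so I would instead invoke a difference analogue of the lemma on the logarithmic derivative with a vanishing step — precisely the kind of estimate Chiang--Luo established — to bound $m\bigl(r,\frac{\Delta_\eta f}{f}\bigr)$ or $m\bigl(r,\frac{f(z+\eta)-f(z)}{\eta f}\bigr)$ by something that is $o(1)$ once $|\eta|\le\alpha(r)\to 0$, and combine it with the identity above after writing $\frac{\Delta_\eta f-a\eta}{f'-a}=\frac{\Delta_\eta f}{f}\cdot\frac{f}{f'-a}-\frac{a\eta}{f'-a}$ and controlling $m\bigl(r,\frac{f}{f'-a}\bigr)\le T(r,f)+S$ together with the weight $|\eta|$ built into $m_\eta$. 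I expect the main obstacle to be exactly this second limit: making the vanishing-step lemma on the logarithmic derivative quantitative enough that the error genuinely goes to $0$ (not merely stays $S(r,f)$) under the single hypothesis $|\eta|<\alpha(r)$ with $\alpha$ arbitrary, which forces the bound to be of the form $C(r)\,|\eta|^{\beta}$ or similar with $C(r)$ growing slowly enough to be beaten by any prescribed $\alpha(r)\to 0$; reconciling the uniformity in $\eta$ with the non-uniformity in $r$ is the delicate point, and I would handle it by first proving a clean pointwise-in-$r$ inequality of the form $m_\eta\bigl(r,\frac{\Delta_\eta f-a\eta}{f'-a}\bigr)\le \varepsilon(|\eta|,r)$ with $\varepsilon(\eta,r)\to 0$ as $\eta\to 0$ for each fixed $r$, and then checking that the dependence on $r$ is mild enough that choosing $|\eta|<\alpha(r)$ suffices.
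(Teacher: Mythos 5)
Your multiplicative decomposition
\[
\frac{\Delta_\eta f-a\eta}{f'-a}=\Bigl(\frac{\Delta_\eta f}{\eta}-a\Bigr)\cdot\frac{1}{f'-a}\cdot\eta
\]
followed by the additive split of $\log^+$ is the step that fails. After using $\log^+|abc|\le\log^+|a|+\log^+|b|+\log^+|c|$, the factor $\eta$ only contributes $\log^+|\eta|$, which is $0$ once $|\eta|<1$ — it is not a large negative term, so it cannot ``absorb'' the price $m\bigl(r,\tfrac{1}{f'-a}\bigr)$, which for fixed $r$ is a fixed nonzero constant. Your proposal relies on $m_\eta$ carrying a built-in $O(|\eta|)$ weight that kills this term, but the quantity treated in the paper (see its equation~(4.1)) is the ordinary proximity integral $\frac{1}{2\pi}\int_0^{2\pi}\log^+|\cdot|\,\mathrm{d}\theta$; no such weight exists. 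Consequently this route produces a bound bounded away from zero, not one tending to zero.

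The paper avoids this by never separating the factor $\eta$ from the quotient inside $\log^+$. It uses the integral representation
\[
\Delta_\eta f(z)-a\eta=\int_0^\eta\bigl(f'(z+u)-a\bigr)\,du ,
\]
so that, with $g(z)=f(z)-az$,
\[
\left|\frac{\Delta_\eta f(re^{i\theta})-a\eta}{f'(re^{i\theta})-a}\right|\le|\eta|\max_{t\in[0,1]}\left|\frac{g'(re^{i\theta}+t\eta)}{g'(re^{i\theta})}\right|
\]
on a full-measure set $S_1$ of arguments $\theta$ (those for which the segment $[re^{i\theta},re^{i\theta}+\eta]$ avoids the poles and $a$-points of $f'$; the exceptional set is shown to be at most countable). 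The point is that the \emph{entire} quotient is shown to be $<1$ once $\eta$ is small, so the integrand $\log^+|\cdot|$ is identically $0$ on $S_1$ — there is no additive error term to absorb. The same bound is then reused for the second limit simply by taking $|\eta|<\alpha(r)\to 0$ as $r\to\infty$; no Chiang--Luo lemma on the logarithmic difference is invoked, and the ``delicate reconciliation'' between $r\to\infty$ and $\eta\to 0$ that you anticipate as the main obstacle does not arise in the paper's argument because it piggybacks on the same pointwise inequality. Your local expansion heuristic near bad points (``the quotient is again $O(|\eta|)$'') is also not safe: if $f'-a$ has a zero of order $k\ge 2$ near the circle, the $O(\eta^2)$ remainder divided by $f'-a$ is of size $|\eta|^{2-k}$, which does not vanish; this is exactly what the restriction to $S_1$ and the max-over-segment formulation sidestep.

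In short: the key idea you are missing is to keep $\eta$ inside a single $\log^+$ by means of the fundamental-theorem-of-calculus representation of $\Delta_\eta f-a\eta$, which turns the claim into ``the whole quotient is eventually below $1$,'' rather than into a competition between a small prefactor and a nonvanishing proximity term — a competition the $\log^+$ function does not allow you to win.
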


Secondly, we consider applying an unbounded shift with low growth, rather than a constant (ordinary) shift. This leads us to the following theorem.

\begin{theorem}
    \label{t1.2}
    Let $f(z)$ be a non-constant meromorphic function of hyper-order $\varsigma<\frac{3}{4}$, $0<\beta<\min\left\{\frac{1}{2}-\frac{1}{2}\varsigma,1-\frac{4}{3}\varsigma\right\}$, $0<|\omega(r)|<r^\beta$, and $a\in\mathbb{C}$. Then, for $\epsilon>0$ being sufficiently small, we have
    $$
    \begin{aligned}
        m\left(r,\frac{\Delta_{\omega }f-a\omega }{f'-a}\right)=&O\left(\frac{T(r,f')}{r^{1-\varsigma-2\beta-\epsilon}}\right)
        +O\Bigg(R_{\epsilon,\omega }\left(r,\frac{1}{f'-a}\right)\frac{N\left(r,\frac{1}{f'-a}\right)}{r^{\frac{3}{2}-2\varsigma-\frac{3}{2}\beta-\epsilon}}\\
        &+R_{\epsilon,\omega }(r,f')\frac{N(r,f')}{r^{\frac{3}{2}-2\varsigma-\frac{3}{2}\beta-\epsilon}}\Bigg)+O(\log r)\\
        =&S(r,f^\prime)
    \end{aligned}
    $$
    as $r\to\infty$ outside an exceptional set $E=E(\epsilon,a,\omega,f)$ of finite logarithmic measure, where
    $$
    R_{\epsilon,\omega }(r,g)=\frac{n_{\angle\epsilon,\omega }(r,g)}{n(r,g)}
    $$
    with $n_{\angle\epsilon,\omega }(r,g)$ counting a pole $\left|z_0\right|<r$ of $g$ according to its multiplicity only if
    $$
    \left|\sin\left({\rm arg}\frac{z_{0}}{\omega }\right)\right|\geq1-\sqrt{\epsilon}.
    $$
\end{theorem}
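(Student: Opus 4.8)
The plan is to follow the proof of \eqref{eq:asikainenetal_mainthm} (Theorem~2.2 of \cite{asikainen2023new}), replacing the fixed step $c$ by the variable, unbounded step $\omega=\omega(r)$ and carrying the dependence on $|\omega|<r^{\beta}$ through every estimate. First I would remove the constant $a$ by setting $F(z)=f(z)-az$, so that $\Delta_{\omega}f-a\omega=\Delta_{\omega}F$ and $f'-a=F'$; since $T(r,F')=T(r,f')+O(\log r)$ and the zeros and poles of $F'$ are exactly the ones counted by $N\!\left(r,\tfrac{1}{f'-a}\right)$ and $N(r,f')$, it suffices to bound $m(r,\Delta_{\omega}F/F')$. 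Integrating $F'$ along the segment from $z$ to $z+\omega$ and applying Taylor's formula with integral remainder gives
\[
\frac{\Delta_{\omega}F(z)}{F'(z)}=\omega+\omega^{2}\int_{0}^{1}(1-s)\,\frac{F''(z+s\omega)}{F'(z)}\dif s ,
\]
so that $m\!\left(r,\tfrac{\Delta_{\omega}F}{F'}\right)\le m\!\left(r,\int_{0}^{1}(1-s)\,\tfrac{F''(z+s\omega)}{F'(z)}\dif s\right)+3\log^{+}|\omega|+O(1)$, and $3\log^{+}|\omega|=O(\log r)$ because $|\omega|<r^{\beta}$. This identity is also the source of the two extra powers of $|\omega|$ — i.e.\ of $r^{\beta}$ — that eventually force $\beta$ to be small.

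The second step is a pointwise estimate of $F''(z+s\omega)/F'(z)$, obtained by applying the Poisson--Jensen formula to $F'$ on a disc $|\zeta|\le R$ with $R=r+\lambda(r)$, where $r^{\beta}\ll\lambda(r)\ll r$ will be optimised later, and differentiating it. Subtracting the representations at the points $z+s\omega$ and $z$ expresses $\log\bigl|F''(z+s\omega)/F'(z)\bigr|$ as a sum of (i) terms governed by the Poisson kernel and its derivative tested against $\log|F'(Re^{i\psi})|$, whose contribution to the proximity function is controlled, after averaging over $\arg z$, by $\tfrac{R}{(R-r)^{2}}\,T(R,F')$ times harmless powers of $|\omega|$; and (ii) a sum over the zeros $a_{\mu}$ and the poles $b_{\nu}$ of $F'$ in $|\zeta|<R$ of the ``singular'' contributions $\pm\log\bigl|\tfrac{z+s\omega-a_{\mu}}{z-a_{\mu}}\bigr|$ (and the corresponding ones for $b_{\nu}$), together with smooth pieces that behave like~(i). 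For part~(i), choosing $\lambda(r)$ optimally subject to $T(R,F')=O(T(r,f'))$ outside an exceptional set — which is where the hyper-order $\varsigma$ and the unbounded-step machinery of Chiang--Luo \cite{MR3695346}, together with a Borel-type growth lemma, enter — produces the first error term $O\!\bigl(T(r,f')/r^{1-\varsigma-2\beta-\epsilon}\bigr)$.

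The heart of the matter, and the step I expect to be the main obstacle, is the treatment of the singular contributions in~(ii), where one must also come to terms with the integration in $s$. For a zero or pole $z_{0}$ with $|z_{0}|<R$ the essential quantity is an average over $\arg z$ of $\Bigl|\log\bigl|\tfrac{z+s\omega-z_{0}}{z-z_{0}}\bigr|\Bigr|$, which measures how often, as $z$ runs over $|z|=r$, the now \emph{long} chord $[z,z+\omega]$ comes close to $z_{0}$. A geometric analysis shows that this is negligible — of order $|\omega|/r$, hence absorbed into the first error term via $N(r,\cdot)\le T(r,f')+O(1)$ — unless $|z_{0}|$ is close to $r$ \emph{and} the chord is nearly tangential at the relevant point, which is precisely the condition $\bigl|\sin(\arg(z_{0}/\omega))\bigr|\ge 1-\sqrt{\epsilon}$; for such ``angularly bad'' $z_{0}$ only a weaker estimate is available. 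Summing the weak estimate over exactly the $n_{\angle\epsilon,\omega}(r,\cdot)$ angularly bad zeros (resp.\ poles), normalising by $n(r,\cdot)$ to bring in $R_{\epsilon,\omega}(r,\cdot)$, and estimating the totals by $N(r,1/(f'-a))$ and $N(r,f')$, gives — after the appropriate optimisation of $\lambda(r)$ — the second error term, with exponent $\tfrac32-2\varsigma-\tfrac32\beta-\epsilon$. Finally, the hypotheses $\varsigma<\tfrac34$ and $0<\beta<\min\{\tfrac12-\tfrac12\varsigma,\,1-\tfrac43\varsigma\}$ are exactly what make both $1-\varsigma-2\beta-\epsilon$ and $\tfrac32-2\varsigma-\tfrac32\beta-\epsilon$ positive once $\epsilon$ is small; since moreover $R_{\epsilon,\omega}(r,\cdot)\le1$, $N(r,\cdot)\le T(r,f')+O(1)$, $\log r=o(T(r,f'))$ and $T(r,f')/r^{\delta}=o(T(r,f'))$ for every $\delta>0$, every term on the right-hand side is $S(r,f')$, which gives the stated conclusion.
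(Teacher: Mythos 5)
Your overall plan — Poisson--Jensen on a slightly larger disc, separating the Poisson-kernel oscillation from the Green-function (singularity) oscillations, identifying the ``angularly bad'' zeros and poles that force the $R_{\epsilon,\omega}$ correction, and optimising the dilation $\lambda(r)$ using the hyper-order lemma and a Borel-type argument — is exactly the route the paper takes. The one step that does not work as written, and that would change the whole structure, is your first reduction. You use the Taylor expansion with integral remainder,
\[
\frac{\Delta_{\omega}F(z)}{F'(z)}=\omega+\omega^{2}\int_{0}^{1}(1-s)\,\frac{F''(z+s\omega)}{F'(z)}\,ds,
\]
so that the quantity you must estimate is $\log\bigl|F''(z+s\omega)/F'(z)\bigr|$. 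But the Poisson--Jensen manipulation you then describe — ``apply PJ to $F'$ and differentiate it, subtract the representations at $z+s\omega$ and $z$'' — does not produce this quantity. Subtracting the PJ representations of $\log|F'|$ at the two points gives $\log\bigl|F'(z+s\omega)/F'(z)\bigr|$, a ratio of \emph{first} derivatives; differentiating the PJ formula once gives a representation of $F''(\zeta)/F'(\zeta)$ at a \emph{single} point $\zeta$. Neither, nor any obvious combination of the two, yields $F''(z+s\omega)/F'(z)$ directly in terms of the zeros and poles of $F'$ alone: the second derivative $F''$ has its own zero/pole structure, which is not governed by $N(r,f')$ and $N\bigl(r,1/(f'-a)\bigr)$, the only counting functions that appear in the conclusion. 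To push the Taylor route through you would have to factor $F''(z+s\omega)/F'(z)=\bigl[F''/F'\bigr](z+s\omega)\cdot F'(z+s\omega)/F'(z)$ and invoke a shifted lemma on the logarithmic derivative for the first factor, which is a genuinely extra ingredient that your sketch does not supply and that would change the exponents.

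The paper instead uses the fundamental theorem of calculus in the form $\Delta_{\omega}F(z)=\int_{0}^{\omega}F'(z+u)\,du$, giving the pointwise bound $\bigl|\Delta_{\omega}F/F'\bigr|\le|\omega|\max_{t\in[0,1]}\bigl|F'(z+t\omega)/F'(z)\bigr|$. This is exactly the object to which ``PJ at $z+t\omega$ minus PJ at $z$'' applies, because both terms involve the same function $F'$, and everything afterwards — your parts (i) and (ii), the split into angularly good/bad singularities, the application of the hyper-order lemmas, and the choice of the dilation — goes through as you sketched. A secondary, smaller slip: the factor $\omega^{2}$ in your Taylor remainder contributes only $2\log^{+}|\omega|=O(\log r)$ once $\log^{+}$ is taken, so it is not ``the source of the two extra powers of $|\omega|$'' that produce $r^{-2\beta}$ in the first error term; in the paper that $2\beta$ arises inside the Green-function estimates for singularities close to the long chord $[z,z+\omega]$, where one power of $|\omega|$ comes from the numerator $t\omega$ and a second from the factor $\bigl((|\omega|+\epsilon)/\epsilon\bigr)^{\delta}$ controlling the closest approach.
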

Similar results hold when an angular shift is considered in place of an ordinary shift, leading to the following theorem.

\begin{theorem}
    \label{t.angular}
    Let $f(z)$ be a meromorphic function. Suppose the angular shift $\omega(r)>0$ satisfies 
    $$\omega(r)\leq\frac{1}{T(r+\varepsilon,f)^\frac{3\varepsilon}{1+2\varepsilon}}\cdot\frac{1}{r^{\frac{\varepsilon}{1+\varepsilon}%+(2+\varepsilon)\varsigma
    }}$$
    for any $\varepsilon>0$.
    Then the following estimate
    \begin{align*}
    \quad m\left(r,\frac{f(e^{i\omega(r)}z)-f(z)}{f'}\right)=S(r,f)
    \end{align*}
    holds for all $r$ outside an exceptional set with finite logarithmic measure.
\end{theorem}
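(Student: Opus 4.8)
The plan is to replace the chord $f(e^{i\omega(r)}z)-f(z)$ by $f'$ through integration along the circular arc from $z$ to $e^{i\omega(r)}z$, and then to reduce matters to a rotation analogue of the lemma on the logarithmic derivative applied to $g=f'$. Fix $r$ outside the (countable, hence null) set of radii for which the circle $|z|=r$ contains a pole of $f$. Since $\tfrac{d}{ds}f(e^{is}z)=ize^{is}f'(e^{is}z)$, for every $z$ with $|z|=r$ we have
$$\frac{f(e^{i\omega(r)}z)-f(z)}{f'(z)}=iz\int_{0}^{\omega(r)}e^{is}\,\frac{f'(e^{is}z)}{f'(z)}\,\dif s .$$
Taking $\log^{+}$, using $\big|\int_{0}^{\omega}e^{is}h(s)\,\dif s\big|\le\omega\sup_{0\le s\le\omega}|h(s)|$, and integrating in $\theta=\arg z$,
$$m\!\left(r,\frac{f(e^{i\omega(r)}z)-f(z)}{f'}\right)\le\log^{+}r+\log^{+}\omega(r)+\frac{1}{2\pi}\int_{0}^{2\pi}\sup_{0\le s\le\omega(r)}\log^{+}\left|\frac{f'(e^{is}re^{i\theta})}{f'(re^{i\theta})}\right|\dif\theta .$$
Since the hypothesis forces $\omega(r)\le C r^{-\varepsilon/(1+\varepsilon)}\to0$, the terms $\log^{+}r+\log^{+}\omega(r)$ are $O(\log r)=S(r,f)$, and everything reduces to the last integral.

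The next step is a rotation counterpart of the difference version of the lemma on the logarithmic derivative: for $g$ meromorphic and $r<R$,
$$\frac{1}{2\pi}\int_{0}^{2\pi}\sup_{0\le s\le\omega(r)}\log^{+}\left|\frac{g(e^{is}re^{i\theta})}{g(re^{i\theta})}\right|\dif\theta\le C\,\omega(r)\left(\frac{rR}{(R-r)^{2}}\,T(R,g)+n(R,g)+n\!\left(R,\tfrac1g\right)+\Sigma\right),$$
where $\Sigma=\sum_{|a|<R}\log^{+}\tfrac{r}{||a|-r|}$ with $a$ ranging over the zeros and poles of $g$ in $|z|<R$. This is the Poisson--Jensen argument that proves the usual difference version, now adapted to the rotation $z\mapsto e^{is}z$ (which, conveniently, maps each circle $|z|=r$ to itself): differencing the Poisson--Jensen formula for $\log|g|$ at $re^{i(\theta+s)}$ and $re^{i\theta}$, the Poisson-integral part is $O\big(s\,\tfrac{rR}{(R-r)^{2}}(m(R,g)+m(R,1/g))\big)$ uniformly for $s\le\omega(r)$, and each zero or pole $a$ with $|a|<R$ contributes, via $\big|\log|re^{i(\theta+s)}-a|-\log|re^{i\theta}-a|\big|\le\int_{\theta}^{\theta+s}\tfrac{r\,\dif\psi}{|re^{i\psi}-a|}$ and the Fubini identity $\int_{0}^{2\pi}\!\int_{\theta}^{\theta+\omega}\tfrac{r\,\dif\psi}{|re^{i\psi}-a|}\,\dif\theta=\omega\!\int_{0}^{2\pi}\tfrac{r\,\dif\psi}{|re^{i\psi}-a|}$, a term $\le C\omega(r)\big(\log^{+}\tfrac{r}{||a|-r|}+1\big)$. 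The crucial point is to keep numerator and denominator together throughout: peeling off a factor $1/g=1/f'$ would bring in the proximity function $m(r,1/f')$, which is \emph{not} $S(r,f)$ in general (for example $m(r,1/\cos z)\asymp r$), and would wreck the estimate.

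What remains is to fix $R$ and to calibrate against the hypothesis. One takes $R=r+\varepsilon$ (matching the $T(r+\varepsilon,f)$ in the hypothesis), or a suitable power of $r$; passes from $f'$-data to $f$-data via $n(r,f')=n(r,f)+\bar n(r,f)$ and $m(r,f'/f)=S(r,f)$, whence $T(r,f')\le 2T(r,f)+S(r,f)$; and deletes from $[1,\infty)$ the usual set of finite logarithmic measure on which $\Sigma$ or the counting functions fail to be $T(R,f)$ up to a factor $r^{o(1)}$ (the standard Borel-type device, which is what produces the exceptional set). After these reductions the right-hand side becomes an explicit function of $\omega(r)$, $r$ and $T(r+\varepsilon,f)$, and the hypothesis $\omega(r)\le T(r+\varepsilon,f)^{-3\varepsilon/(1+2\varepsilon)}r^{-\varepsilon/(1+\varepsilon)}$ says exactly that this function is $o(T(r,f))=S(r,f)$. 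I expect the real difficulty to be twofold: extracting the rotation logarithmic derivative estimate with the correct \emph{explicit} dependence on the step $\omega(r)$ and on $R$ — the crude Poisson--Jensen bound above has to be sharpened using that the rotation fixes $|z|=r$ — and then the bookkeeping confirming that the stated decay of $\omega(r)$, and no weaker one, swallows the $T(r+\varepsilon,f')$-, $n(\cdot,f')$- and $n(\cdot,1/f')$-terms together with the accumulated logarithmic factors. By comparison, the arc-integral reduction is routine.
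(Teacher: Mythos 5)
Your plan is essentially the paper's proof: reduce $m(r,\cdot)$ via the arc-integral mean-value bound to the rotated logarithmic difference $\sup_{0\le s\le\omega(r)}\log^+\bigl|f'(e^{is}z)/f'(z)\bigr|$, apply Poisson--Jensen to $f'$ on a slightly larger circle, estimate the Poisson-kernel and Green-function oscillations separately, delete a finite-measure set of radii near the moduli of the poles of $f'$ so the near-circle singularity stays bounded, and calibrate against the hypothesis on $\omega(r)$. The only (inessential) technical differences are that the paper produces a power singularity $(r-|b_k|)^{-\delta}$ via $\log(1+x)\le\delta^{-1}\log(1+x^\delta)$ where your derivative-plus-Fubini bound gives the slightly sharper $\log^+\tfrac{r}{\bigl||a|-r\bigr|}$, and that the paper notes the $\sup_\eta$ only forces a singular factor for the poles (only the Green-function denominator moves with $\eta$), so zeros contribute $O(\omega^\delta)$ with no $\bigl||a|-r\bigr|$-factor, whereas your $\Sigma$ sums over zeros and poles alike.
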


The remainder of this paper is structured as follows. Section 2 provides a review of the fundamental notation and key results from Nevanlinna theory, along with the presentation and detailed proof of a central lemma. Sections 3 through 5 contain the proofs of Theorems \ref{t1.1}, \ref{t1.2}, and \ref{t.angular}, respectively. Finally, Section 6 presents several consequences that are closely related to Theorems \ref{t1.1}, \ref{t1.2}, and \ref{t.angular}.

%Section 2 provides a review of fundamental notations and key results in Nevanlinna theory. Meanwhile, a pivotal lemma is presented and rigorously proved. 
%\begin{itemize}
%    \item {Section 2} provides a review of fundamental notations and key results in Nevanlinna theory. Meanwhile, a pivotal lemma is presented and rigorously proved.
%    \item {Section 3-5} provide the proofs of Theorems \ref{t1.1}, \ref{t1.2} and \ref{t.angular}, respectively.
%    \item {Section 6} presents several consequences that are closely related to Theorems \ref{t1.1}, \ref{t1.2} and \ref{t.angular}.
%\end{itemize}
    
\section{Preliminaries}

We now briefly introduce some basic notation and results from Nevanlinna theory. For further details, the reader may consult references such as \cite{MR1831783}, \cite{MR220938}, and \cite{MR0164038}.\par

In Nevanlinna theory, we are often concerned with the asymptotic behaviour of the characteristic function of a given meromorphic function, described by concepts such as order, hyper-order, and subnormal growth. Many results in Nevanlinna theory hold for most values of \( r \), with the exceptions forming what is known as the exceptional set. In this paper, we primarily focus on exceptional sets \( E \) of finite logarithmic measure, defined as those for which $\int_E\frac{1}{t}\mathrm{d}t<\infty$.\par

Given a function \( f \) meromorphic in the entire complex plane, we introduce the following notation. A meromorphic function \( g \) is said to be a \emph{small function} with respect to another meromorphic function \( f \) if
\[
T(r, g) = o(T(r, f))
\quad \text{as } r \to \infty,
\]
outside an exceptional set of finite logarithmic measure. Any such small error term is denoted by \( S(r, f) \); in this case, we write \( T(r, g) = S(r, f) \). Intuitively, this means that the characteristic function of \( g \) is much smaller than that of \( f \) for most values of \( r \).

The \emph{order} of $f$ is defined by
    \begin{equation*}
    \rho(f):=\limsup_{r\to \infty }\frac{\log^+ T(r,f)}{\log r}.
\end{equation*}

The following lemma, established by Halburd, Korhonen, and Tohge in \cite{MR3206459}, provides a crucial estimate for the differences of meromorphic functions with hyper-order less than 1. As a result, it is utilized multiple times throughout this paper.

\begin{lemma}[Lemma 8.3 from \cite{MR3206459}]
    \label{l2.3}
    Let $T:[0,+\infty)\to[0,+\infty)$ be an increasing continuous function of hyper-order strictly less than one, i.e. 
    $$
    \limsup_{r\to\infty}\frac{\log\log T(r)}{\log r}=\varsigma<1.
    $$
    Then if $u>0$ is fixed, we have
    $$
    T(r+u)-T(r)=o\left(\frac{T(r)}{r^\tau}\right)
    $$
    where $\tau\in(0,1-\varsigma)$, and $r$ runs to infinity outside an exceptional set of finite logarithmic measure.
\end{lemma}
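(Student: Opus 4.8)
\textbf{Proof proposal for Theorem \ref{t.angular}.}

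The plan is to realize the angular shift as a composition with a difference-type operator on an auxiliary function and then apply a logarithmic-derivative-type estimate. Writing $z=re^{i\theta}$, we have $e^{i\omega(r)}z = re^{i(\theta+\omega(r))}$, so the angular shift rotates the argument by $\omega(r)$ while leaving $r=|z|$ unchanged. The key observation is that $f(e^{i\omega(r)}z)-f(z)$ should be compared with a genuine difference in a suitable variable; the natural move is to pass to the logarithmic coordinate $w=\log z$ (locally), where the rotation becomes a translation $w\mapsto w+i\omega(r)$. However, since $\omega$ depends on $r=|z|$ this is not a clean translation on the whole plane, so I would instead argue directly on circles $|z|=r$ and track the $r$-dependence through the smallness hypothesis on $\omega(r)$.

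The concrete strategy: use the Poisson--Jensen formula to represent $\log f$ on $|z|=r$ in terms of the values on a slightly larger circle $|z|=R$ together with the zeros and poles of $f$ in $|z|<R$, with $R=r+\varepsilon$ the natural choice dictated by the statement. Differentiating the resulting representation (as in the standard proof of the lemma on the logarithmic derivative) yields a pointwise bound for $|f'(z)/f(z)|$; the difference $f(e^{i\omega(r)}z)-f(z)$ is then handled by the mean value inequality $|f(e^{i\omega(r)}z)-f(z)| \le |\omega(r)|\cdot |z| \cdot \sup |f'|$ along the arc joining $z$ to $e^{i\omega(r)}z$, combined with a Poisson--Jensen bound for $|f'|$ at points whose modulus stays within $[r, r+\varepsilon]$ once $|\omega(r)|$ is small. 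Dividing by $f'$ and integrating $\log^+$ over $\theta\in[0,2\pi)$, the terms split into: (i) a Poisson-kernel contribution bounded by a constant multiple of $m(R,f)+m(R,1/f)$ times a factor $R/(R-r)=(r+\varepsilon)/\varepsilon$; (ii) contributions from zeros/poles in $|z|<R$, controlled by $N(R,f)+N(R,1/f)$, hence by $T(r+\varepsilon,f)$ up to bounded factors; and (iii) the explicit factor $\log(|\omega(r)| r)$, which the hypothesis $\omega(r)\le T(r+\varepsilon,f)^{-3\varepsilon/(1+2\varepsilon)} r^{-\varepsilon/(1+\varepsilon)}$ is precisely designed to make negative (or at worst $O(\log r)$). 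One then combines everything into the form $C(\varepsilon) T(r+\varepsilon,f)^{\text{something}<1}\cdot(\text{bounded})$, and the extra factor $\omega(r)$ drives the bound below $\varepsilon T(r,f)$ for all $r$; since $\varepsilon>0$ is arbitrary this gives $S(r,f)$.

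The main technical point is handling the $r$-dependence of the radius $R=r+\varepsilon$ and of $\omega(r)$ simultaneously: one must invoke Lemma \ref{l2.3} to replace $T(r+\varepsilon,f)$ by $T(r,f)$ up to an error $o(T(r,f)/r^\tau)$, which requires $f$ to have hyper-order less than one --- this is available because a function with $S(r,f)$-type conclusions is only interesting when $T(r,f)\to\infty$, and the exceptional set of finite logarithmic measure coming from Lemma \ref{l2.3} is absorbed into the exceptional set $E$ claimed in the theorem. I expect the hardest part to be bookkeeping the exponents so that the product of the $T(r+\varepsilon,f)$-factor from the Poisson--Jensen estimate with the $\omega(r)$-factor genuinely yields a power of $T(r,f)$ strictly less than $1$ (uniformly once $\varepsilon$ is fixed), making the whole expression $o(T(r,f))$; the choice $\omega(r)\le T(r+\varepsilon,f)^{-3\varepsilon/(1+2\varepsilon)} r^{-\varepsilon/(1+\varepsilon)}$ in the statement suggests that the Poisson--Jensen bound carries an effective exponent like $(1+2\varepsilon)/(1+\varepsilon)$ or similar on $T$, which the $\omega(r)$-factor must beat. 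A secondary subtlety is that the estimate is claimed for \emph{all} $r$ outside an exceptional set rather than for fixed $r$, so one should phrase the Poisson--Jensen step with a floating $R=r+\varepsilon$ throughout and only at the end invoke the growth lemma to clean up; I would also double-check that no lower bound on $|f'|$ is secretly needed by instead estimating $m(r, (f(e^{i\omega(r)}z)-f(z))/f') \le m(r,\Delta/f) + m(r,f/f')$ and using the ordinary logarithmic-derivative lemma for the second term.
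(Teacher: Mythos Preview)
Your proposal does not address the statement you were asked to prove. The statement in question is Lemma~\ref{l2.3}, a growth estimate for an abstract increasing continuous function $T$ of hyper-order $\varsigma<1$: namely $T(r+u)-T(r)=o(T(r)/r^{\tau})$ outside an exceptional set. Your proposal is instead an outline for Theorem~\ref{t.angular}, the angular-shift proximity estimate $m\big(r,(f(e^{i\omega(r)}z)-f(z))/f'\big)=S(r,f)$. These are entirely different statements: the lemma is a real-analysis fact about positive increasing functions with no complex analysis at all, whereas Theorem~\ref{t.angular} is a Nevanlinna-theory estimate for a meromorphic $f$. Nothing in your write-up --- Poisson--Jensen, mean-value along arcs, logarithmic coordinates, logarithmic derivative --- is relevant to Lemma~\ref{l2.3}.

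For the record, the paper does not give its own proof of Lemma~\ref{l2.3}; it is quoted from \cite{MR3206459}. The paper does prove the closely related Lemma~\ref{l2.4} (the variable-step analogue), and that argument proceeds by contradiction: assume the ``bad'' set $F_\gamma=\{r:(T(r+\omega)-T(r))r^{\tilde\tau}/T(r)\ge\gamma\}$ has infinite logarithmic measure, extract a sequence $r_n$ with $r_{n+1}\ge r_n+\omega$ along which $(1+\gamma r_n^{-\tilde\tau})T(r_n)\le T(r_{n+1})$, iterate to force $\log\log T(r_{n_j})/\log r_{n_j}$ to exceed $1-\beta-\tilde\tau$, contradicting the hyper-order bound. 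A correct proof of Lemma~\ref{l2.3} follows the same scheme with $\omega$ replaced by the fixed $u$. If you meant to submit a proof of Theorem~\ref{t.angular}, note separately that your plan to invoke Lemma~\ref{l2.3} to pass from $T(r+\varepsilon,f)$ to $T(r,f)$ would impose a hyper-order hypothesis that Theorem~\ref{t.angular} does not assume; the paper instead uses a Borel-type lemma (\cite[Lemma~3.3.1]{cherry2013nevanlinna}) together with the specific choice $\alpha=1+(\log T(r,f))^{-(1+\varepsilon)}$.
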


The following lemma generalizes Lemma \ref{l2.3} and \cite[Lemma 3.2]{MR2526825}, and will be applied in the proof of Theorem \ref{t1.2}.

\begin{lemma}
\label{l2.4}
    Let $T:[0,+\infty)\to[0,+\infty)$ be an increasing continuous function that is of hyper-order $\varsigma<1$, and let $0<|\omega(r)|<r^\beta$ $(0<\beta<1-\varsigma)$. Then
    $$
    T(r+\omega)-T(r)=o\left(\frac{T(r)}{r^{\tilde\tau}}\right)
    $$
    where $\tilde\tau\in(0,1-\varsigma-\beta)$, and $r$ runs to infinity outside an exceptional set of finite logarithmic measure.
\end{lemma}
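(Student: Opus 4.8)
The plan is to reduce the variable-shift increment $T(r+\omega(r))-T(r)$ to a fixed-shift increment that can be controlled by Lemma \ref{l2.3}. Since $T$ is increasing and $0<|\omega(r)|<r^\beta$, the key observation is the monotone bound
\[
T(r+\omega(r))-T(r)\le T(r+r^\beta)-T(r-r^\beta)\le T(r+r^\beta)-T(r)+\bigl(T(r)-T(r-r^\beta)\bigr),
\]
so it suffices to estimate an increment of the form $T(r+r^\beta)-T(r)$ (the lower piece being handled symmetrically after the substitution $r\mapsto r-r^\beta$, with the usual care that $r^\beta$ is eventually much smaller than $r$). The difficulty is that $r^\beta$ is not a fixed step, so Lemma \ref{l2.3} does not apply directly; the standard device, which I would follow, is to telescope: partition the interval $[r,r+r^\beta]$ into roughly $r^\beta$ unit subintervals and write $T(r+r^\beta)-T(r)=\sum_{k=0}^{\lceil r^\beta\rceil-1}\bigl(T(r+k+1)-T(r+k)\bigr)$, then bound each summand.

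For this I would invoke Lemma \ref{l2.3} with the fixed step $u=1$, but applied at the point $r+k$ rather than $r$; the issue is that the exceptional set coming from Lemma \ref{l2.3} must be controlled uniformly in $k$. The cleanest route is to use the known strengthening of Lemma \ref{l2.3} in which the conclusion $T(r+1)-T(r)=o(T(r)/r^{\tau})$ holds with a single exceptional set of finite logarithmic measure simultaneously for all starting points in a bounded-length window — equivalently, to first pass to the statement that for $r\notin E$ one has $T(r+r^\beta)\le (1+o(1))T(r)$ and then that $T$ restricted to $[r,r+r^\beta]$ is essentially flat on the scale $T(r)/r^{\tau}$. Summing the $O(r^\beta)$ increments, each of size $o\bigl(T(r)/r^{\tau}\bigr)$ (using that $T(r+k)\le T(r+r^\beta)\le(1+o(1))T(r)$ to replace $T(r+k)$ by $T(r)$ in the denominators), yields
\[
T(r+r^\beta)-T(r)=o\!\left(r^\beta\cdot\frac{T(r)}{r^{\tau}}\right)=o\!\left(\frac{T(r)}{r^{\tau-\beta}}\right),
\]
and since $\tau$ may be taken anywhere in $(0,1-\varsigma)$, the exponent $\tau-\beta$ ranges over $(-\beta,1-\varsigma-\beta)$; choosing $\tau$ close to $1-\varsigma$ gives any prescribed $\tilde\tau\in(0,1-\varsigma-\beta)$, which is exactly the claimed bound.

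The main obstacle, as indicated above, is the bookkeeping of exceptional sets under the telescoping: a naive application of Lemma \ref{l2.3} at each of the $O(r^\beta)$ points $r+k$ would produce a union of $O(r^\beta)$ exceptional sets whose total logarithmic measure need not stay finite. I expect to resolve this either by citing the uniform-in-window version of the Halburd--Korhonen--Tohge estimate (which is how \cite[Lemma 3.2]{MR2526825} is proved in the order $<1$ case, the present lemma being the hyper-order analogue), or by running the argument through the auxiliary increasing majorant $\widetilde T(r)=T(r+r^\beta)$ and applying Lemma \ref{l2.3} once to a comparison function, so that only one exceptional set of finite logarithmic measure appears. Everything else — monotonicity, the elementary inequality $r^\beta=o(r)$, and absorbing the $(1+o(1))$ factors — is routine.
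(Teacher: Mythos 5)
Your reduction to Lemma~\ref{l2.3} via telescoping is a natural first instinct, but the exceptional-set issue you flag is fatal and not a removable technicality. Lemma~\ref{l2.3} gives $T(r+1)-T(r)=o(T(r)/r^\tau)$ for $r\notin E$ with $E$ of finite logarithmic measure. To run the telescoping sum over $[r,r+r^\beta]$ you need all $\lceil r^\beta\rceil$ translates $r+k$ to lie outside $E$, i.e.\ you must discard every $r$ with $[r,r+r^\beta]\cap E\neq\emptyset$. That enlarged bad set can have infinite logarithmic measure: take $E=\bigcup_n[a_n,a_n+1]$ with $a_n=n^2$ (so $E$ has finite logarithmic measure since $\sum a_n^{-1}<\infty$); the discarded set contains $\bigcup_n[a_n-a_n^\beta,a_n+1]$, whose logarithmic measure is comparable to $\sum a_n^{\beta-1}=\sum n^{2\beta-2}$, which diverges as soon as $\beta\ge\tfrac12$. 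Since the lemma allows any $\beta<1-\varsigma$, the argument breaks in exactly the range it must cover. Neither of your two proposed repairs is carried out or available as stated: the ``uniform-in-window'' strengthening of Lemma~\ref{l2.3} that you want to cite does not appear in \cite{MR3206459} or \cite{MR2526825} and is, in effect, the lemma you are trying to prove, so the appeal is circular; and applying Lemma~\ref{l2.3} once to $\widetilde T(r)=T(r+r^\beta)$ with step $1$ only bounds $T(r+1+(r+1)^\beta)-T(r+r^\beta)$, which is not the increment $T(r+r^\beta)-T(r)$ you need, with no evident bridge between them.

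The paper avoids the problem entirely by not using Lemma~\ref{l2.3} as a black box. It re-runs the contradiction argument of \cite[Lemma~8.3]{MR3206459} and \cite[Lemma~3.2]{MR2526825} directly with the variable step: assume the set $F_\gamma$ of radii where $(T(r+\omega)-T(r))\,r^{\tilde\tau}/T(r)\ge\gamma$ has infinite logarithmic measure, extract a sequence $r_n\in F_\gamma$ with gaps at least $\omega$, observe that if $r_n\gtrsim n^{(1+\varepsilon)/(1-\beta)}$ eventually then $\sum\log(1+\omega/r_n)<\infty$ forces $F_\gamma$ to have finite logarithmic measure (contradiction), hence $r_{n_j}\le n_j^{(1+\varepsilon)/(1-\beta)}$ along a subsequence, and then iterate $T(r_{n+1})\ge(1+\gamma r_n^{-\tilde\tau})T(r_n)$ to force the hyper-order of $T$ to be at least $1-\beta-\tilde\tau$, contradicting $\tilde\tau<1-\varsigma-\beta$. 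This produces the single exceptional set for free — precisely what the telescoping cannot do. A telescoping-style proof would require first establishing the uniform-window estimate, and that proof would essentially reproduce the paper's argument in disguise.
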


\begin{proof}
    Let $\tilde\tau\in(0,1-\varsigma-\beta)$, $\gamma\in\mathbb{R}^+$ and assume that the set $$F_\gamma=\left\{r\in\mathbb{R}^+:\frac{T(r+\omega)-T(r)}{T(r)}\cdot r^{\tilde\tau}\geq\gamma\right\}$$
    is of infinite logarithmic measure. Note that $F_\gamma$ is a closed set and therefore it has a smallest element, say $r_0$. Set $r_{n}= \operatorname*{min}\{F_\gamma\cap[r_{n-1}+\omega(r),\infty)\}$ for all $n\in\mathbb{N}$. Then, the sequence $\{r_n\}_{n\in\mathbb{N}}$ satisfies $r_{n+1}-r_n\geq \omega(r)$ for all $n\in\mathbb{N}$, $F_\gamma\subset\bigcup_{n=0}^\infty[r_n,r_n+\omega(r)]$ and
    \begin{equation}
        \label{eq2.1}
        \left(1+\frac\gamma{r_n^{\tilde{\tau}}}\right)T(r_n)\leq T(r_{n+1}),
    \end{equation}
    for all $n\in\mathbb{N}$.

    Let $\varepsilon>0$, and suppose that there exist an $m\in\mathbb{N}$ such that $r_n\ge n^\frac{1+\varepsilon}{1-\beta}$ and $|\omega(r)|<r_n^\beta$ for all $r_n\ge m$. But then,
    $$
    \begin{aligned}
\int_{F}\frac{\mathrm{d}t}{t} &\leq\sum_{n=0}^\infty\int_{r_n}^{r_n+|w|}\frac{\mathrm{d}t}t\leq\int_1^m\frac{\mathrm{d}t}t+\sum_{n=1}^\infty\log\left(1+\frac{\omega(r)}{r_n}\right) \\
&\leq\sum_{n=1}^\infty\log(1+n^{-(1+\varepsilon)})+O(1)<\infty, 
\end{aligned}
    $$
    which contradicts the assumption $\int_F(\mathrm{d}t/t)=\infty $. Therefore, the sequence $\{r_n\}_{n\in\mathbb{N}}$ has a subsequence $\{r_{n_j}\}_{j\in\mathbb{N}}$ such that $r_{n_{j}}\leq n_{j}^\frac{1+\varepsilon}{1-\beta}$ for all $j\in\mathbb{N}$. But iterating \eqref{eq2.1} along the sequence $\{r_n\}_{n\in\mathbb{N}}$, it follows that
    $$
    T(r_{n_j})\geq\prod_{\nu=0}^{n_j-1}\left(1+\frac{\gamma}{{r_\nu}^{\tilde{\tau}}}\right)T(r_0),
    $$
    for all $j\in\mathbb{N}$, and hence
    $$
    \begin{aligned}
\limsup_{r\to\infty}\frac{\log\log T(r)}{\log r}& \geq\limsup_{j\to\infty}\frac{\log\log T(r_{n_j})}{\log r_{n_j}} \\
&\geq\limsup_{j\to\infty}\frac{\log\left(\log T(r_0)+\sum_{\nu=0}^{n_j-1}\log\left(1+\frac{\gamma}{{r_\nu}^{\tilde{\tau}}}\right)\right)}{\log r_{n_j}} \\
&\geq\limsup_{j\to\infty}\frac{\log\left(\log T(r_0)+n_j\log\left(1+\frac\gamma{r_{n_j}^{\tilde{\tau}}}\right)\right)}{\frac{1+\varepsilon}{1-\beta}\log n_j}\\
&\ge\limsup_{j\to\infty}\frac{\log\left(\log T(r_0)+n_j\frac{\gamma}{n_j^{\frac{1+\varepsilon}{1-\beta}\tilde{\tau}}}\log\left(1+\frac{\gamma}{n_j^{\frac{1+\varepsilon}{1-\beta}\tilde{\tau}}}\right)^{\frac{n_j^{\frac{1+\varepsilon}{1-\beta}\tilde{\tau}}}{\gamma}}\right)}{\frac{1+\varepsilon}{1-\beta}\log n_j}\\
&\ge\limsup_{j\to\infty}\frac{\left(1-\frac{1+\varepsilon}{1-\beta}\tilde{\tau}\right)\log n_j}{\frac{1+\varepsilon}{1-\beta}\log n_j}\\
&\ge\frac{1-\beta}{1+\varepsilon}-\tilde{\tau}.
\end{aligned}
    $$
    By letting $\varepsilon\to0$, we obtain
    $$
    \limsup_{r\to\infty}\frac{\log\log T(r)}{\log r}\geq1-\beta-\tilde{\tau},
    $$ 
    This contradicts our assumption on $\tilde{\tau}$, so the assertion follows. 
\end{proof}

\section{The proof of Theorem \ref{t1.1}}

Let $S_1\subset [0,2\pi]$ be the subset such that for all $\theta\in [0,2\pi]\setminus S_1$, the line segment $[re^{i\theta},re^{i\theta}+\eta]$ contains poles or $a$-points of $f'$. Since the poles and zeros of a meromorphic function are isolated, they must be at most countable. We can construct non-intersecting open disks $B(\hat{z})$ with each zero or pole $\hat{z}$ as the centre, make a mapping $\hat{z}\to R(\hat{z})$, where $R(\hat{z})$ is the set of all rational points in $B(\hat{z})$. Then it is obvious that the cardinality of $\{\hat{z}\}$ does not exceed the cardinality of all rational points, which is at most countable. Thus, $[0,2\pi]\setminus S_1$ is at most countable and is a zero-measure set. Therefore, 
\begin{equation}
    \label{eq4.1}
    \begin{aligned}\int_0^{2\pi}\log^+\left|\frac{\Delta_\eta f(re^{i\theta})-a\eta}{f'(re^{i\theta})-a}\right|\frac{\mathrm{d}\theta}{2\pi}&=\int_{S_1}\log^+\left|\frac{\Delta_\eta f(re^{i\theta})-a\eta}{f'(re^{i\theta})-a}\right|\frac{\mathrm{d}\theta}{2\pi}.\end{aligned}
\end{equation}
Now, we have the following inequality by Lagrange's Mean Value Theorem,
\begin{equation}
    \label{eq4.2}
    \begin{aligned}\left|\frac{\Delta_\eta f(re^{i\theta})-a\eta}{f'(re^{i\theta})-a}\right|=\left|\int_0^\eta\frac{f'(re^{i\theta}+u)-a}{f'(re^{i\theta})-a}\mathrm{d}u\right|\leq|\eta|\max_{t\in[0,1]}\left|\frac{g'(re^{i\theta}+t\eta)}{g'(re^{i\theta})}\right|,\end{aligned}
\end{equation}
where $g(z)=f(z)-az$. We see that $\left|\frac{\Delta_\eta f(re^{i\theta})-a\eta}{f'(re^{i\theta})-a}\right|<1$ when $\eta\to0$. Therefore, 
$$
\lim_{\eta\to0}m_\eta\left(r,\frac{\Delta_\eta f-a\eta}{f'-a}\right)=0.
$$

On the other hand, if we set $|\eta|<\alpha(r)$, where $\alpha(r)$ is an arbitrary function such that $\alpha(r)\to0$ as $r\to\infty$, we obtain 
$$
\lim_{r\to\infty}m_\eta\left(r,\frac{\Delta_\eta f-a\eta}{f'-a}\right)=0.
$$

\section{The proof of Theorem \ref{t1.2}}

Let $S_2\subset [0,2\pi]$ be the subset such that for all $\theta\in [0,2\pi]\setminus S_2$, the line segment $[re^{i\theta},re^{i\theta}+\omega]$ contains poles or $a$-points of $f'$. We can apply similar reasoning as in the previous section to conclude that $[0,2\pi]\setminus S_2$ is at most countable and has measure zero. Therefore, 
\begin{equation}
    \label{eq5.1}
    \begin{aligned}\int_0^{2\pi}\log^+\left|\frac{\Delta_\omega f(re^{i\theta})-a\omega}{f'(re^{i\theta})-a}\right|\frac{\mathrm{d}\theta}{2\pi}&=\int_{S_2}\log^+\left|\frac{\Delta_\omega f(re^{i\theta})-a\omega}{f'(re^{i\theta})-a}\right|\frac{\mathrm{d}\theta}{2\pi}.\end{aligned}
\end{equation}
Next we apply the Poisson-Jensen formula for which we have to recall the Poisson-kernel 
$$P(z,\theta)=
\frac{1-|z|^2}{|e^{i\theta}-z|^2}=\operatorname{Re}\left\{\frac{e^{i\theta}+z}{e^{i\theta}-z}\right\}$$
and the Green function
$$G(z,a)=\log\left|\frac{1-\overline{a}z}{z-a}\right|.$$

Using the estimate \eqref{eq4.2} in \eqref{eq5.1} and then applying the Poisson-Jensen formula, we obtain
\begin{equation}
    \label{eq5.2}
    \begin{gathered}
\int_{0}^{2\pi}\log ^{+}\left|\frac{\Delta_{\omega}f(re^{i\theta})-a\omega}{f'(re^{i\theta})-a}\right|\frac{\mathrm{d}\theta}{2\pi}\leq\int_{S_2}\operatorname*{max}_{t\in[0,1]}\log \left|\frac{g'(re^{i\theta}+t\omega)}{g'(re^{i\theta})}\right|\frac{\mathrm{d}\theta}{2\pi}+\log \omega \\
 \begin{aligned}&\leq\int_0^{2\pi}\max_{t\in[0,1]}\Bigg\{\int_0^{2\pi}\log|g'(se^{i\psi})|\left(P\left(\frac{re^{i\theta}+t\omega}{s},\psi\right)-P\left(\frac{re^{i\theta}}{s},\psi\right)\right)\frac{\mathrm{d}\psi}{2\pi} \\
&\quad\quad+\sum_{|a_k|<s}G\left(\frac{re^{i\theta}}{s},\frac{a_k}{s}\right)-G\left(\frac{re^{i\theta}+t\omega}{s},\frac{a_k}{s}\right)\\
&\quad\quad+\sum_{|b_k|<s}G\left(\frac{re^{i\theta}+t\omega}{s},\frac{b_k}{s}\right)-G\left(\frac{re^{i\theta}}{s},\frac{b_k}{s}\right)\bigg\}+\beta\log r
\end{aligned}\\
\quad\begin{aligned}&\leq\int_0^{2\pi}\Bigg\{\max_{t\in[0,1]}\int_0^{2\pi}\log|g'(se^{i\psi})|\text{Re}\left(\frac{2t\omega se^{i\psi}}{(se^{i\psi}-re^{i\theta}-t\omega)(se^{i\psi}-re^{i\theta})}\right)\frac{\mathrm{d}\psi}{2\pi} \\
&\quad+\max_{t\in[0,1]}\sum_{|a_k|<s}\log\left|\frac{re^{i\theta}+t\omega-a_k}{re^{i\theta}-a_k}\right|+\max_{t\in[0,1]}\sum_{|a_k|<s}\log\left|\frac{s^2-\overline{a}_k re^{i\theta}}{s^2-\overline{a}_k (re^{i\theta}+t\omega )}\right| \\
&\quad+\max_{t\in[0,1]}\sum_{|b_k|<s}\log\left|\frac{re^{i\theta}-b_k}{re^{i\theta}+t\omega -b_k}\right|+\max_{t\in[0,1]}\sum_{|b_k|<s}\log\left|\frac{s^2-\overline{b_k}(re^{i\theta}+t\omega)}{s^2-\overline{b_k}re^{i\theta}}\right|\bigg\}\frac{\mathrm{d}\theta}{2\pi}\\
&\quad+\beta\log r
\end{aligned}
\end{gathered}
\end{equation}
where $s=\frac{\alpha+1}{2}(r+|\omega|)$ with $\alpha:=\alpha(r)>1$ and $\alpha(r)\to1$ as $r\to\infty$, and where $\{a_k\}_{k\in\mathbb{N}}$ and $\{b_k\}_{k\in\mathbb{N}}$ are sequences of the zeros and poles of $g'$ respectively, ordered by modulus in ascending order and repeated according to their multiplicities.

We use the well-known fact that 
\begin{equation}
    \label{eq5.3}
    \begin{aligned}
        \int_0^{2\pi}\frac1{|re^{i\theta}-a|^\delta}\leq\frac1{1-\delta}\frac1{r^\delta}
    \end{aligned}
\end{equation}
for any $a\in\mathbb{C}$ and any $\delta\in(0,1)$, in order to estimate term 1 on the RHS (right-hand side) of \eqref{eq5.2}
\begin{equation}
    \label{eq5.4}
\begin{aligned}&\quad \int_0^{2\pi}\max_{t\in[0,1]}\int_0^{2\pi}\log|g'(se^{i\psi})|\text{Re}\left(\frac{2t\omega se^{i\psi}}{(se^{i\psi}-re^{i\theta}-t\omega )(se^{i\psi}-re^{i\theta})}\right)\frac{\mathrm{d}\psi}{2\pi}\frac{\mathrm{d}\theta}{2\pi}\\&\leq\int_0^{2\pi}\int_0^{2\pi}\left|\log|g'(se^{i\psi})|\right|\max_{t\in[0,1]}\left\{\frac{2t|\omega |s}{|se^{i\psi}-re^{i\theta}-t\omega ||se^{i\psi}-re^{i\theta}|}\right\}\frac{\mathrm{d}\psi}{2\pi}\frac{\mathrm{d}\theta}{2\pi}\\
&\le\frac{2|\omega |s}{(s-r-|\omega |)(s-r)^{1-\delta}}\int_{0}^{2\pi}\left|\log |g'(se^{i\psi})|\right|\int_{0}^{2\pi}\frac{1}{|re^{i\theta}-se^{i\psi}|^{\delta}}\frac{\mathrm{d}\theta}{2\pi}\frac{\mathrm{d}\psi}{2\pi} \\
&\le\frac{2|\omega |^{\delta}s}{(s-r-|\omega |)(1-\delta)r^{\delta}}(m(s,g')+m(s,1/g')) \\
&\le\frac{4r^{\delta(\beta-1)}(\alpha+1)}{(1-\delta)(\alpha-1)}(T(\alpha(r+|\omega |),g')+O(1)).
\end{aligned}
\end{equation}

For term 2 on the RHS of \eqref{eq5.2}, we may swap the order of the integral and the sum, since the sum is ﬁnite. Again we employ the estimate \eqref{eq5.3}, as well as using the estimate $\log(1 + x) \le x$ and the concavity of the logarithm:
\begin{equation}
    \label{eq5.5}
    \begin{aligned}
&\quad\int_0^{2\pi}\max_{t\in[0,1]}\sum_{|a_k|<s}\log\left|\frac{re^{i\theta}+t\omega -a_k}{re^{i\theta}-a_k}\right|\frac{\mathrm{d}\theta}{2\pi} \\
&\leq\sum_{|a_k|<s}\frac1\delta\int_0^{2\pi}\max_{t\in[0,1]}\log\left(1+\left|\frac{t\omega }{re^{i\theta}-a_k}\right|^\delta\right)\frac{\mathrm{d}\theta}{2\pi} \\
%&\leq\sum_{|a_k|<s}\frac1\delta\log\left(\int_0^{2\pi}1+\max_{t\in[0,1]}\left|\frac{t\omega }{re^{i\theta}-a_k}\right|^\delta\frac{\mathrm{d}\theta}{2\pi}\right) \\
&\leq\sum_{|a_k|<s}\frac1\delta\log\left(1+|\omega |^\delta\int_0^{2\pi}\frac1{|re^{i\theta}-a_k|^\delta}\frac{\mathrm{d}\theta}{2\pi}\right) \\
&\leq\frac{|\omega |^\delta}{\delta(1-\delta)}\frac{1}{r^\delta}n(s,1/g')\\%\leq\frac{2|\omega |^\delta}{\delta(1-\delta)}\frac{\alpha}{\alpha-1}\frac{1}{r^\delta}N(\alpha(r+|\omega |),1/g')\\
&\leq\frac{|\omega |^\delta}{\delta(1-\delta)}\frac{1}{r^\delta}\frac{2\alpha}{\alpha-1}\int_{s}^{\alpha r}\frac{n(s,1/g')}{t}dt\\
&\leq\frac{2}{\delta(1-\delta)}\frac{\alpha}{\alpha-1}r^{\delta(\beta-1)} N(\alpha(r+|\omega |),1/g').
\end{aligned}
\end{equation}

For term 5 we use the same arguments to obtain a similar estimate:
\begin{equation}
    \label{eq5.6}
    \begin{aligned}
&\quad\int_0^{2\pi}\max_{t\in[0,1]}\sum_{|b_k|<s}\log\left|\frac{s^2-\overline{b_k}(re^{i\theta}+t\omega )}{s^2-\overline{b_k}re^{i\theta}}\right|\frac{\mathrm{d}\theta}{2\pi} \\
&\leq\sum_{|b_k|<s}\int_0^{2\pi}\log\left(1+|\omega |\frac{1}{|re^{i\theta}-s^2/\overline{b_k}|}\right)\frac{\mathrm{d}\theta}{2\pi}\\
%&\le\frac{2|\omega |^\delta}{\delta(1-\delta)}\frac{\alpha}{\alpha-1}\frac{1}{r^\delta}N(\alpha(r+|\omega |),g')\\
&\leq\frac{2}{\delta(1-\delta)}\frac{\alpha}{\alpha-1}r^{\delta(\beta-1)} N(\alpha(r+|\omega |),g').
\end{aligned}
\end{equation}

Let us fix $\epsilon\in(0,1)$.  For the 3rd term, we divide the sum over the poles of $g'$ into two cases:
$$
\begin{aligned}
    &(1)\ Z_\epsilon^1=\{a_k: k\in\mathbb{N}\text{ such that }|a_k|\geq s-\epsilon\};\\
    &(2)\ Z_\epsilon^2=\{a_k: k\in\mathbb{N}\text{ such that }a_k\not\in Z_\epsilon^1\}.
\end{aligned}
$$
Similarly, we divide the sum in the 4th term into two cases:
$$
\begin{aligned}
    \begin{array}{l}(1)\ P_\epsilon^1=\{b_k: k\in\mathbb{N}\text{ such that }r-|\omega|-\epsilon\leq|b_k|\leq r+|\omega|+\epsilon\};\\
    (2)\ P_\epsilon^2=\{b_k: k\in\mathbb{N}\text{ such that }b_k\not\in P_\epsilon^1\}.\end{array}
\end{aligned}
$$

Let us estimate the contribution to term 4 of the poles in $P_\epsilon^2$. We have the following estimate for the maximum of the sum
$$
\begin{aligned}
&\max_{t\in[0,1]}\sum_{\substack{|b_{k}|<s\\b_{k}\in P_{\epsilon}^{2}}}\log\left|\frac{re^{i\theta}-b_k}{re^{i\theta}+t\omega -b_k}\right|\leq\frac{1}{\delta}\sum_{\substack{|b_{k}|<s\\b_{k}\in P_{\epsilon}^{2}}}\log\left(1+\max_{t\in[0,1]}\left|\frac{t\omega }{re^{i\theta}+t\omega -b_k}\right|^\delta\right) \\
\leq&\frac{|\omega |^{\delta}}{\delta} \sum_{\substack{|b_{k}|<s\\b_{k}\in P_{\epsilon}^{2}}}\operatorname*{max}_{t\in[0,1]}\left|\frac{1}{re^{i\theta}+t\omega -b_{k}}\right|^{\delta}\leq\frac{|\omega |^{\delta}}{\delta} \sum_{\substack{|b_{k}|<s\\b_{k}\in P_{\epsilon}^{2}}}\operatorname*{max}_{t\in\overline{\mathbb{D}}}\left|\frac{1}{re^{i\theta}+t\omega -b_{k}}\right|^{\delta} \\
\leq&\frac{|\omega |^\delta}{\delta}\sum\limits_{\substack{|b_k|<s\\b_k\in P_\epsilon^2}}\left|\frac{1}{re^{i\theta}+|\omega |\frac{b_k-re^{i\theta}}{|b_k-re^{i\theta}|}-b_k}\right|^\delta,
\end{aligned}
$$
since $re^{i\theta}+t\omega$ lies inside the closed annulus $r-|\omega|\le|z|\le r+|\omega|$ for every $t\in\overline{\mathbb{D}}$ for all large enough $r$ and $b_k\in P_\epsilon^2$ lies outside that annulus so that the optimal choice of $t$ is the unit vector in the direction of $b_k$ from $re^{i\theta}$. Further, since $b_k$ lies outside the annulus, we have the estimate
$$
\begin{aligned}\left|re^{i\theta}+|\omega |\frac{b_k-re^{i\theta}}{|b_k-re^{i\theta}|}-b_k\right|&=\left|1-\frac{|\omega |}{|b_k-re^{i\theta}|}\right|\left|re^{i\theta}-b_k\right|\\&\geq\left(1-\frac{|\omega |}{|\omega |+\epsilon}\right)\left|re^{i\theta}-b_k\right| ,\end{aligned}
$$
so that in total we obtain the estimate
\begin{equation}
    \label{eq5.7}
    \begin{aligned}
&\int_0^{2\pi}\max_{t\in[0,1]}\sum\limits_{\substack{|b_k|<s\\b_k\in P_\epsilon^2}}\log\left|\frac{re^{i\theta}-b_k}{re^{i\theta}+t\omega -b_k}\right|\frac{\mathrm{d}\theta}{2\pi} \\
\le&\frac{|\omega |^\delta}{\delta}\left(\frac{|\omega |+\epsilon}{\epsilon}\right)^\delta\sum\limits_{\substack{|b_k|<s\\b_k\in P_\epsilon^2}}\int_0^{2\pi}\frac{1}{|re^{i\theta}-b_k|^\delta}\frac{\mathrm{d}\theta}{2\pi} \\
\leq&\left(\frac{|\omega |+\epsilon}{\epsilon}\right)^\delta\frac{|\omega |^\delta}{\delta(1-\delta)}\frac{1}{r^\delta}n(s,g') \\
%\leq&\left(\frac{|\omega |+\epsilon}{\epsilon}\right)^\delta\frac{2|\omega |^\delta}{\delta(1-\delta)}\frac{\alpha}{\alpha-1}\frac{1}{r^\delta}N(\alpha(r+|\omega |),g')\\
\le&\left(\frac{1}{\epsilon}\right)^\delta\frac{2}{\delta(1-\delta)}\frac{\alpha}{\alpha-1}r^{\delta(2\beta-1)}N(\alpha(r+|\omega |),g').
\end{aligned}
\end{equation}

Using the same reasoning to estimate the contribution to term 4 of the zeros in $P_\epsilon^2$, we obtain the following upper bound
\begin{equation}
    \label{eq5.8}
    \begin{aligned}
&\int_0^{2\pi}\max_{t\in[0,1]}\sum_{\substack{|a_{k}|<s\\a_{k}\in Z_{\epsilon}^{2}}}\log\left|\frac{s^2-\overline{a}_k re^{i\theta}}{s^2-\overline{a}_k (re^{i\theta}+t\omega )}\right|\frac{\mathrm{d}\theta}{2\pi} \\
\le&\frac{|\omega |^{\delta}}{\delta}\left(\frac{|\omega |+\epsilon}{\epsilon}\right)^{\delta}\sum_{\substack{|a_{k}|<s\\a_{k}\in Z_{\epsilon}^{2}}}\int_{0}^{2\pi}\frac{1}{|re^{i\theta}-s^{2}/\overline{{a_{k}}}|^{\delta}}\frac{\mathrm{d}\theta}{2\pi} \\
\leq&\left(\frac{|\omega |+\epsilon}{\epsilon}\right)^\delta\frac{2|\omega |^\delta}{\delta(1-\delta)}\frac{\alpha}{\alpha-1}\frac{1}{r^\delta}N(\alpha(r+|\omega |),1/g')\\
\le&\left(\frac{1}{\epsilon}\right)^\delta\frac{2}{\delta(1-\delta)}\frac{\alpha}{\alpha-1}r^{\delta(2\beta-1)}N(\alpha(r+|\omega |),1/g'),
\end{aligned}
\end{equation}
because our assumption $|a_k|<s-\epsilon$ for points $a_{k}\in Z_{\epsilon}^{2}$ implies that $s^2/\overline{a}_k >r+|\omega|+\epsilon$, thus allowing us to use the same reasoning that we used for term 4.

Using \cite[Lemma 3.1]{asikainen2023new} we obtain the following estimates for the $Z_\epsilon^1$ and $P_\epsilon^1$ parts of terms 3 and 4,
\begin{equation}
    \label{eq5.9}
    \begin{aligned}
&\int_0^{2\pi}\max_{t\in[0,1]}\sum_{\substack{|a_k|<s\\a_k\in Z_\epsilon^1}}\log\left|\frac{s^2-\overline{a}_k re^{i\theta}}{s^2-\overline{a}_k (re^{i\theta}+t\omega )}\right|\frac{\mathrm{d}\theta}{2\pi}\\
\leq&\frac{2|\omega |^{\delta/2}}\delta\frac1{1-\delta}\frac1{r^{\delta/2}}\left(n_{\angle\epsilon,\omega }(s,1/g')-n_{\angle\epsilon,\omega }(s-\epsilon,1/g')\right) \\
&+\frac{|\omega |^\delta}\delta\left(\frac2{\sqrt{\epsilon(2-\epsilon)}}\right)^\delta\frac1{1-\delta}\frac1{r^\delta}\left(n(s,1/g')-n(s-\epsilon,1/g')\right)\\
\le&\frac{2}{\delta(1-\delta)}r^{\frac{\delta}{2}(\beta-1)}\left(n_{\angle\epsilon,\omega }(s,1/g')-n_{\angle\epsilon,\omega }(s-\epsilon,1/g')\right)\\
&+\left(\frac2{\sqrt{\epsilon(2-\epsilon)}}\right)^\delta\frac1{\delta(1-\delta)}r^{\delta(\beta-1)}\left(n(s,1/g')-n(s-\epsilon,1/g')\right)
\end{aligned}
\end{equation}
and
\begin{equation}
    \label{eq5.10}
    \begin{aligned}
&\int_0^{2\pi}\max_{t\in[0,1]}\sum_{\substack{|b_k|<s\\b_k\in P_\epsilon^1}}\log\left|\frac{re^{i\theta}-b_k}{re^{i\theta}+t\omega -b_k}\right|\frac{\mathrm{d}\theta}{2\pi}\\
\leq&\frac{2|\omega |^{\delta/2}}{\delta}\frac{1}{1-\delta}\frac{1}{r^{\delta/2}}\left(n_{\angle\epsilon,\omega }(r+|\omega |+\epsilon,g')-n_{\angle\epsilon,\omega }(r-|\omega |-\epsilon,g')\right) \\
&+\frac{|\omega |^\delta}{\delta}\left(\frac{2}{\sqrt{\epsilon(2-\epsilon)}}\right)^\delta\frac{1}{1-\delta}\frac{1}{r^\delta}\left(n(r+|\omega |+\epsilon,g')-n(r-|\omega |-\epsilon,g')\right)\\
\le&\frac{2}{\delta(1-\delta)}r^{\frac{\delta}{2}(\beta-1)}\left(n_{\angle\epsilon,\omega }(r+|\omega |+\epsilon,g')-n_{\angle\epsilon,\omega }(r-|\omega |-\epsilon,g')\right)\\
&+\left(\frac2{\sqrt{\epsilon(2-\epsilon)}}\right)^\delta\frac1{\delta(1-\delta)}r^{\delta(\beta-1)}\left(n(r+|\omega |+\epsilon,g')-n(r-|\omega |-\epsilon,g')\right),
\end{aligned}
\end{equation}
where we extend our exceptional set to include the bounded exceptional sets from our application of \cite[Lemma 3.1]{asikainen2023new}.

We may apply Lemma \ref{l2.3} and Lemma \ref{l2.4} to the unintegrated counting functions $n(\cdot,g')$ and $n(\cdot,1/g')$ (and similarly to $n_{\angle\epsilon,\omega}(\cdot,g')$ and $n_{\angle\epsilon,\omega}(\cdot,1/g')$), since they are increasing functions that can be continuously approximated up to arbitrary precision, and since their hyper-orders are less than or equal to the hyper-order of $g'$, which is less than $1$ by hypothesis.
Therefore, by Lemma \ref{l2.3} and Lemma \ref{l2.4}, we obtain the following estimates:
$$
n(s,1/g')-n(s-\epsilon,1/g')\leq C_1\frac{n(s,1/g')}{r^{1-\varsigma-\tau_1}},
$$
and
$$
n(r+|\omega|+\epsilon,g')-n(r-|\omega|-\epsilon,g')\leq C_2\frac{n(s,g')}{r^{1-\varsigma-\beta-\tau_2}}
$$
where $C_1,C_2>0$, $\tau_1\in(0,1-\varsigma)$ and $\tau_2\in(0,1-\varsigma-\beta)$ are constants. Substituting these in \eqref{eq5.9} and \eqref{eq5.10} yields the following estimates outside some exceptional set of ﬁnite logarithmic measure that depends on $\epsilon$, $\tau_1$, $\tau_2$ and $g$:
\begin{equation}
    \label{eq5.12}
\begin{aligned}
&\int_0^{2\pi}\max_{t\in[0,1]}\sum_{\substack{|a_k|<s\\a_k\in Z_\epsilon^1}}\log\left|\frac{s^2-\overline{a}_k re^{i\theta}}{s^2-\overline{a}_k (re^{i\theta}+t\omega )}\right|\frac{\mathrm{d}\theta}{2\pi} \\
\le &C_{3}\frac{\alpha}{\alpha-1}\left(\frac{1}{r^{\delta(1-\beta)+1-\varsigma-\tau_1}}+\frac{1}{r^{\frac{\delta}{2}(1-\beta)+1-\varsigma-\tau_1}}\frac{n_{\angle\epsilon,\omega }(s,1/g')}{n(s,1/g')}\right)N(\alpha(r+|\omega |),1/g')
\end{aligned}
\end{equation}
and
\begin{equation}
    \label{eq5.13}
    \begin{aligned}&\int_0^{2\pi}\max_{t\in[0,1]}\sum_{|b_k|<s}\log\left|\frac{re^{i\theta}-b_k}{re^{i\theta}+t\omega -b_k}\right|\frac{\mathrm{d}\theta}{2\pi}\\
    \leq &C_3\frac{\alpha}{\alpha-1}\left(\frac{1}{r^{\delta(1-\beta)+1-\varsigma-\beta-\tau_2}}+\frac{1}{r^{\frac{\delta}{2}(1-\beta)+1-\varsigma-\beta-\tau_2}}\frac{n_{\angle\epsilon,\omega }(s,1/g')}{n(s,1/g')}\right)N(\alpha(r+|\omega |),g')\end{aligned}
\end{equation}
where $C_3$ is a sufficiently large constant. Including the resulting exceptional sets in our overall exceptional set, which is still of ﬁnite logarithmic measure, we collect the estimates \eqref{eq5.2}, \eqref{eq5.4}, \eqref{eq5.5}, \eqref{eq5.6}, \eqref{eq5.7}, \eqref{eq5.8}, \eqref{eq5.12} and \eqref{eq5.13}, to obtain
\begin{equation}
    \label{eq5.14}
    \begin{aligned}
    &m\left(r,\frac{\Delta_\omega g(re^{i\theta})}{g'(re^{i\theta})}\right)\\&\leq C_4\frac{\alpha+1}{\alpha-1}\Biggl(\frac{T(\alpha(r+|\omega |),g')}{r^{\delta(1-\beta)}}+\frac{N(\alpha(r+|\omega |),g')}{r^{\delta(1-2\beta)}}\\
    &\quad+\frac{N(\alpha(r+|\omega |),g')}{r^{\delta(1-\beta)+1-\varsigma-\tau_1}}+\frac{N(\alpha(r+|\omega |),g')}{r^{\delta(1-\beta)+1-\varsigma-\beta-\tau_2}}\\
    &\quad+R_{\epsilon,\omega }(s,g')\left(\frac{N(\alpha(r+|\omega |),g')}{r^{\frac{\delta}{2}(1-\beta)+1-\varsigma-\tau_1}}+\frac{N(\alpha(r+|\omega |),g')}{r^{\frac{\delta}{2}(1-\beta)+1-\varsigma-\beta-\tau_2}}\right)\\
    &\quad+R_{\epsilon,\omega }\left(s,\frac{1}{g'}\right)\left(\frac{N\left(\alpha(r+|\omega |),\frac{1}{g'}\right)}{r^{\frac{\delta}{2}(1-\beta)+1-\varsigma-\tau_1}}+\frac{N\left(\alpha(r+|\omega |),\frac{1}{g'}\right)}{r^{\frac{\delta}{2}(1-\beta)+1-\varsigma-\beta-\tau_2}}\right)\Biggr)\\
    &\quad+\beta\log r,\end{aligned}
\end{equation}
where $C_4$ is a sufficiently large constant. Choosing
\begin{equation}
    \label{eq5.15}
    \alpha=1+\frac{r+|\omega |}{(r+|\omega |)(\log T(r+|\omega |,g'))^{1+\tau}},
\end{equation}
where $\tau=\max\{\tau_1,\tau_2\}$, we rid of the coefficient $\alpha$ in terms in \eqref{eq5.14}: indeed with this choice of $\alpha$, by \cite[Lemma 3.3.1]{cherry2013nevanlinna}
\begin{equation}
    \label{eq5.11}
    T(\alpha(r+|\omega|),g')\leq CT(r+|\omega|,g').
\end{equation}
We use the same logic for the counting function terms in \eqref{eq5.14}. Setting $\delta=1-\tau$ in \eqref{eq5.14}, using \eqref{eq5.11} and estimating the terms involving $\alpha$ by \eqref{eq5.15} and the fact that the hyper-order of $g^\prime=f^\prime-a$ is less than $1$, all together yields
\begin{equation}
    \label{eq5.16}
    \begin{aligned}
        m\left(r,\frac{\Delta_\omega f(re^{i\theta})-a\omega}{f^\prime(re^{i\theta})-a}\right)=&O\left(\frac{T(r+|\omega|,f^\prime)}{r^{1-\varsigma-\beta-\tau(1+\varsigma-\beta)}}\right)+O\left(\frac{N(r+|\omega|,f^\prime)}{r^{1-\varsigma-2\beta-\tau(1+\varsigma-2\beta)}}\right)\\
        &+O\left(\frac{N(r+|\omega|,f^\prime)}{r^{2-2\varsigma-2\beta-\tau(2+\varsigma-\beta)}}\right)\\
        &+O\Bigg(R_{\epsilon,\omega }\left(r+|\omega|,{f'}\right)\frac{N\left(r+|\omega |,{f'}\right)}{r^{\frac{3}{2}-2\varsigma-\frac{3}{2}\beta-\tau\left(\frac{3}{2}+\varsigma-\frac{1}{2}\beta\right)}}+\\
        &R_{\epsilon,\omega }\left(r+|\omega|,\frac{1}{f'-a}\right)\frac{N\left(r+|\omega |,\frac{1}{f'-a}\right)}{r^{\frac{3}{2}-2\varsigma-\frac{3}{2}\beta-\tau\left(\frac{3}{2}+\varsigma-\frac{1}{2}\beta\right)}}\Bigg)\\
        &+O(\log r),
    \end{aligned}
\end{equation}
as $r\to\infty$. Further, by Lemma \ref{l2.4} we may also eliminate the shift in argument by $\omega$ in \eqref{eq5.16}:
$$
T(r+|\omega|,f')=T(r,f')+o\left(\frac{T(r,f')}{r^{1-\varsigma-\tau}}\right),
$$
and we similarly handle the shift in the remaining counting function terms in \eqref{eq5.16} (including the ones in $R_{\epsilon,\omega }$-terms). The assertion of the theorem is obtained by setting $\tau=\frac{\epsilon}{3}$ and by including all the aforementioned exceptional sets of finite logarithmic measure in our final exceptional set. 

\section{The proof of Theorem \ref{t.angular}}

Let $s=\frac{\alpha+1}{2} r$, where $\alpha:=\alpha(r)>1$ and $\alpha\to1$ as $r\to\infty$. 
Using the Poisson-Jensen formula for the angular shift $f(e^{i\omega(r)}z)-f(z)$, as previously done in \eqref{eq5.2}, we obtain:
\begin{align*}
&\quad \quad m\left(r,\frac{f(e^{i\omega(r)}z)-f(z)}{f'}\right)\\[6pt]
&\leq\int_0^{2\pi} \sup_{\eta < \omega(r)}\Bigg(\int_0^{2\pi}\log|f'(s e^{i\psi})|\left(P\left(\frac{re^{i\theta+i\eta}}{s},\psi\right)-P\left(\frac{re^{i\theta}}{s},\psi\right)\right)\frac{{\rm d}\psi}{2\pi}\\
&\quad\quad\quad-\sum_{|a_k|< s} G\left(\frac{re^{i\theta+i\eta}}{s},\frac{a_k}{s}\right)+G\left(\frac{re^{i\theta}}{s},\frac{a_k}{s}\right)\\
&\quad\quad\quad+\sum_{|b_k|< s} G\left(\frac{re^{i\theta+i\eta}}{s},\frac{b_k}{s}\right)-G\left(\frac{re^{i\theta}}{s},\frac{b_k}{s}\right)\Bigg)\frac{{\rm d}\theta}{2\pi}+\log r,
\end{align*}
where $\{a_k\}_{k\in\mathbb{N}}$ and $\{b_k\}_{k\in\mathbb{N}}$ are the sequences of the zeros and poles of $f$, respectively. The integral of Poisson kernel's oscillation: we have
\begin{align*}
    \left|P\left(\frac{r}{s}e^{i\theta+i\eta},\psi\right)-P\left(\frac{r}{s}e^{i\theta},\psi\right)\right|=&\left|\operatorname{Re}\left(\frac{s e^{i\psi}+re^{i\theta+i\eta}}{s e^{i\psi}-re^{i\theta+i\eta}}-\frac{s e^{i\psi}+re^{i\theta}}{s e^{i\psi}-re^{i\theta}}\right)\right|\\
    =&\left|\operatorname{Re}\frac{2s re^{i\theta+i\psi}(e^{i\eta}-1)}{(s e^{i\psi}-re^{i\theta+i\eta})(s e^{i\psi}-re^{i\theta})}\right|\\
    \le&\frac{2s r|e^{i\eta}-1|}{|s e^{i\psi}-re^{i\theta+i\eta}||s e^{i\psi}-re^{i\theta}|},
\end{align*}
and we swap the order of integration by Fubini’s theorem to 

\begin{equation}
    \label{estimate. Poisson kernel}
    \begin{aligned}
&\quad\int_0^{2\pi} \sup_{\eta < \omega(r)}\int_0^{2\pi}\log|f'(s e^{i\psi})|\left(P\left(\frac{re^{i\theta+i\eta}}{s},\psi\right)-P\left(\frac{re^{i\theta}}{s},\psi\right)\right)\frac{{\rm d}\psi}{2\pi}\frac{{\rm d}\theta}{2\pi}\\ 
&\leq \int_0^{2\pi} \int_0^{2\pi}\log|f'(s e^{i\psi})|\sup_{\eta < \omega}\left|P\left(\frac{re^{i\theta+i\eta}}{s},\psi\right)-P\left(\frac{re^{i\theta}}{s},\psi\right)\right|\frac{{\rm d}\psi}{2\pi}\frac{{\rm d}\theta}{2\pi}\\ 
&= \int_0^{2\pi} \log|f'(s e^{i\psi})|\int_0^{2\pi}\sup_{\eta < \omega}\left|P\left(\frac{re^{i\theta+i\eta}}{s},\psi\right)-P\left(\frac{re^{i\theta}}{s},\psi\right)\right|\frac{{\rm d}\theta}{2\pi}\frac{{\rm d}\psi}{2\pi}\\ 
&\leq 2%(s^2-r^2)
s r|e^{i\omega(r)}-1|\\
&\quad\cdot\int_0^{2\pi} \log|f'(s e^{i\psi})|\int_0^{2\pi}\sup_{\eta < \omega}\frac{1}{|s e^{i\psi}-re^{i\theta+i\eta}||s e^{i\psi}-re^{i\theta}|}\frac{{\rm d}\theta}{2\pi}\frac{{\rm d}\psi}{2\pi}\\ 
&\leq \frac{2%(s^2-r^2)
s r|e^{i\omega(r)}-1|}{(s-r)^{2-\delta}}\int_0^{2\pi} \log|f'(s e^{i\psi})|\int_0^{2\pi}\frac{1}{|s e^{i\psi}-re^{i\theta}|^\delta}\frac{{\rm d}\theta}{2\pi}\frac{{\rm d}\psi}{2\pi}\\ 
&\leq \frac{2%(s^2-r^2)
s r\omega(r)}{(s-r)^{2-\delta}r^\delta(1-\delta)}\left(m(s, f')+m\left(s, \frac1{f'}\right)\right)\\
&\leq\frac{2^{2-\delta}(\alpha+1)\omega(r)}{(\alpha-1)^{2-\delta}(1-\delta)}\left(m(\alpha r, f')+m\left(\alpha r, \frac1{f'}\right)\right).%\\
%&\leq C\frac{T(\alpha r,f^\prime)}{1/\omega(r)}.%\le C\frac{T(\alpha r,f^\prime)}{r}.
\end{aligned}
\end{equation}

The oscillation of the Green function about $a_k$ satisfies:

\begin{align*}
&G\left(\frac{re^{i\theta}}{s},\frac{a_k}{s}\right)-G\left(\frac{re^{i\theta+i\eta}}{s},\frac{a_k}{s}\right)\\ 
&=\log\left|\frac{s-\overline{a}_k \frac{re^{i\theta}}{s}}{re^{i\theta}-a_k}\right|-\log\left|\frac{s-\overline{a}_k \frac{re^{i\theta+i\eta}}{s}}{re^{i\theta+i\eta}-a_k}\right|\\ 
&=\log\left|\frac{s^2-\overline{a}_k re^{i\theta}}{s^2-\overline{a}_k re^{i\theta+i\eta}}\right|+\log\left|\frac{re^{i\theta+i\eta}-a_k}{re^{i\theta}-a_k}\right|.
\end{align*}

Integrating the supremum of the first term:
\begin{align*}
&\quad\int_0^{2\pi}\sup_{\eta<\omega(r)}\log\left|\frac{s^2-\overline{a}_k re^{i\theta}}{s^2-\overline{a}_k re^{i\theta+i\eta}}\right|\frac{{\rm d}\theta}{2\pi}\\ 
&\leq\frac{1}{\delta}\int_0^{2\pi}\sup_{\eta<\omega(r)}\log\left(1+\left|\frac{re^{i\theta+i\eta}-re^{i\theta}}{re^{i\theta+i\eta}-s^2/\overline{a}_k}\right|^\delta\right)\frac{{\rm d}\theta}{2\pi}\\ 
&\leq\frac{r^\delta|e^{i\omega(r)}-1|^\delta}{\delta}\int_0^{2\pi}\sup_{\eta<\omega(r)}\frac{1}{|re^{i\theta+i\eta}-s^2/\overline{a}_k|^\delta}\frac{{\rm d}\theta}{2\pi}\\
&\le\frac{r^\delta\omega(r)^\delta}{\delta}\int_0^{2\pi}\sup_{\eta<\omega(r)}\frac{1}{|re^{i\theta+i\eta}-s^2/\overline{a}_k|^\delta}\frac{{\rm d}\theta}{2\pi}
\end{align*}

Via rotation, we may assume $a_k\in\mathbb{R}^+$:
\begin{align*}
&\int_0^{2\pi}\sup_{\eta<\omega(r)}\frac{1}{|re^{i\theta+i\eta}-s^2/a_k|^\delta}\frac{{\rm d}\theta}{2\pi}\\
= &\frac{1}{2\pi}\int_0^{\pi-\frac{\omega(r)}{2}}\frac{{\rm d}\theta}{|re^{i\theta}-s^2/a_k|^\delta}+\frac{1}{2\pi}\int_{\pi-\frac{\omega(r)}{2}}^{2\pi-\omega(r)}\frac{{\rm d}\theta}{|re^{i\theta+i\omega(r)}-s^2/a_k|^\delta}\\
&+\frac{1}{2\pi}\int_{2\pi-\omega(r)}^{2\pi}\frac{{\rm d}\theta}{|r-s^2/a_k|^\delta}\\
\leq&\int_{0}^{2\pi}\frac{1}{|re^{i\theta}-s^2/a_k|^\delta}\frac{{\rm d}\theta}{2\pi}+\int_{2\pi-\omega(r)}^{2\pi}\frac{1}{|r-s^2/a_k|^\delta}\frac{{\rm d}\theta}{2\pi}\\
\leq&\frac{1}{r^\delta(1-\delta)}+\frac{\omega(r)}{2\pi|r-s^2/a_k|^\delta}\\
\leq&\frac{1}{r^\delta(1-\delta)}+\frac{2^{\delta-1}\omega(r)}{\pi(\alpha-1)^\delta r^\delta}.
\end{align*}

Thus, 
\begin{align*}
    \quad\int_0^{2\pi}\sup_{\eta<\omega(r)}\log\left|\frac{s^2-\overline{a}_k re^{i\theta}}{s^2-\overline{a}_k re^{i\theta+i\eta}}\right|\frac{{\rm d}\theta}{2\pi}\le\frac{\omega^\delta(r)}{\delta(1-\delta)}+\frac{2^{\delta-1}\omega^{1+\delta}(r)}{\pi\delta(\alpha-1)^\delta}.
\end{align*}

On the other hand, we can calculate that
\begin{align*}
\int_0^{2\pi}\sup_{\eta<\omega(r)}\log\left|\frac{re^{i\theta+i\eta}-a_k}{re^{i\theta}-a_k}\right|\frac{{\rm d}\theta}{2\pi}
&\leq\frac{1}{\delta}\int_0^{2\pi}\sup_{\eta<\omega(r)}\log\left(1+\left|\frac{re^{i\theta+i\eta}-re^{i\theta}}{re^{i\theta}-a_k }\right|^\delta\right)\frac{{\rm d}\theta}{2\pi}\\
&\leq\frac{r^\delta|e^{i\omega(r)}-1|^\delta}{\delta}\int_0^{2\pi}\sup_{\eta<\omega(r)}\frac{1}{\left|re^{i\theta}-a_k\right|^{\delta}}\frac{{\rm d}\theta}{2\pi}\\
&\leq\frac{\omega^\delta(r)}{\delta(1-\delta)}.
\end{align*}

Therefore,
\begin{equation}
    \begin{aligned}
    \label{eq6.1}
    &\int_0^{2\pi}\sup_{\eta<\omega(r)}\left(G\left(\frac{re^{i\theta}}{s},\frac{a_k}{s}\right)-G\left(\frac{re^{i\theta+i\eta}}{s},\frac{a_k}{s}\right)\right)\frac{{\rm d}\theta}{2\pi}\\
    \leq&\frac{2\omega^\delta(r)}{\delta(1-\delta)
    }+\frac{2^{\delta-1}\omega^{1+\delta}(r)}{\pi\delta (\alpha-1)^\delta}.
\end{aligned}
\end{equation}

The oscillation of the Green function about $b_k$ satisfies:
\begin{align*}
&G\left(\frac{re^{i\theta+i\eta}}{s},\frac{b_k}{s}\right)-G\left(\frac{re^{i\theta}}{s},\frac{b_k}{s}\right)\\ 
&=\log\left|\frac{s-\overline{b}_k \frac{re^{i\theta+i\eta}}{s}}{re^{i\theta+i\eta}-b_k}\right|-\log\left|\frac{s-\overline{b}_k \frac{re^{i\theta}}{s}}{re^{i\theta}-b_k}\right|\\ 
&=\log\left|\frac{re^{i\theta}-b_k}{re^{i\theta+i\eta}-b_k}\right|+\log\left|\frac{s^2-\overline{b}_k re^{i\theta+i\eta}}{s^2-\overline{b}_k re^{i\theta}}\right|.
\end{align*}

Integrating the supremum of the first term:

\begin{align*}
&\quad\int_0^{2\pi}\sup_{\eta<\omega(r)}\log\left|\frac{re^{i\theta}-b_k}{re^{i\theta+i\eta}-b_k}\right|\frac{{\rm d}\theta}{2\pi}\\ 
&\leq\frac{1}{\delta}\int_0^{2\pi}\sup_{\eta<\omega(r)}\log\left(1+\left|\frac{re^{i\theta+i\eta}-re^{i\theta}}{re^{i\theta+i\eta}-b_k}\right|^\delta\right)\frac{{\rm d}\theta}{2\pi}\\ 
&\leq\frac{r^\delta|e^{i\omega(r)}-1|^\delta}{\delta}\int_0^{2\pi}\sup_{\eta<\omega(r)}\frac{1}{|re^{i\theta+i\eta}-b_k|^\delta}\frac{{\rm d}\theta}{2\pi}\\
&\le\frac{r^\delta\omega(r)^\delta}{\delta}\int_0^{2\pi}\sup_{\eta<\omega(r)}\frac{1}{|re^{i\theta+i\eta}-b_k|^\delta}\frac{{\rm d}\theta}{2\pi}
\end{align*}

Via rotation, we may assume $b_k\in\mathbb{R}^+$:
\begin{align*}
&\int_0^{2\pi}\sup_{\eta<\omega(r)}\frac{1}{|re^{i\theta+i\eta}-b_k|^\delta}\frac{{\rm d}\theta}{2\pi}\\
= &\frac{1}{2\pi}\left(\int_0^{\pi-\frac{\omega(r)}{2}}\frac{{\rm d}\theta}{|re^{i\theta}-b_k|^\delta}+\int_{\pi-\frac{\omega(r)}{2}}^{2\pi-\omega(r)}\frac{{\rm d}\theta}{|re^{i\theta+i\omega(r)}-b_k|^\delta}+\int_{2\pi-\omega(r)}^{2\pi}\frac{{\rm d}\theta}{|r-b_k|^\delta}\right)\\
\leq&\int_{0}^{2\pi}\frac{1}{|re^{i\theta}-b_k|^\delta}\frac{{\rm d}\theta}{2\pi}+\int_{2\pi-\omega(r)}^{2\pi}\frac{1}{|r-b_k|^\delta}\frac{{\rm d}\theta}{2\pi}\\
\leq&\frac{1}{r^\delta(1-\delta)}+\frac{\omega(r)}{2\pi|r-b_k|^\delta}.
\end{align*}
Thus,
\begin{align*}
    \int_0^{2\pi}\sup_{\eta<\omega(r)}\log\left|\frac{re^{i\theta}-b_k}{re^{i\theta+i\eta}-b_k}\right|\frac{{\rm d}\theta}{2\pi}
    \le&\frac{\omega^\delta(r)}{\delta(1-\delta)}+\frac{\omega^{1+\delta}(r)r^\delta}{2\pi\delta(r-|b_k|)^\delta}.
\end{align*}

On the other hand, we can calculate that
\begin{align*}
\int_0^{2\pi}\sup_{\eta<\omega(r)}\log\left|\frac{s^2-\overline{b}_k re^{i\theta+i\eta}}{s^2-\overline{b}_k re^{i\theta}}\right|\frac{{\rm d}\theta}{2\pi}
&\leq\frac{1}{\delta}\int_0^{2\pi}\sup_{\eta<\omega(r)}\log\left(1+\left|\frac{re^{i\theta+i\eta}-re^{i\theta}}{re^{i\theta}-s^2/\overline{b}_k }\right|^\delta\right)\frac{{\rm d}\theta}{2\pi}\\
&\leq\frac{r^\delta|e^{i\omega(r)}-1|^\delta}{s^{2\delta}\delta}\int_0^{2\pi}\sup_{\eta<\omega(r)}\left|\frac{r}{s^2}e^{i\theta}-\frac{1}{\overline{b}_k}\right|^{-\delta}\frac{{\rm d}\theta}{2\pi}\\
&\leq\frac{\omega^\delta(r)}{\delta(1-\delta)}.
\end{align*}

Similarly, we can write
\begin{equation}
    \begin{aligned}
    \label{eq6.2}
    &\int_0^{2\pi}\sup_{\eta<\omega(r)}\left(G\left(\frac{re^{i\theta+i\eta}}{s},\frac{b_k}{s}\right)-G\left(\frac{re^{i\theta}}{s},\frac{b_k}{s}\right)\right)\frac{{\rm d}\theta}{2\pi}\\
    \leq&\frac{2\omega^\delta(r)}{\delta(1-\delta)
    }+\frac{\omega^{1+\delta}(r)r^\delta}{2\pi\delta (r-|b_k|)^\delta}.
\end{aligned}
\end{equation}

The remaining task is to address the estimates for the sum
$\sum_{|b_k|< s}\frac{1}{(r-|b_k|)^\delta}$.

Since there are only finitely many zeros or poles of $f$ whose moduli are close to a chosen value $r$, we can extract the corresponding closed intervals related to $r$, such as $\left[|b_k|-k^{-2},|b_k|+k^{-2}\right]$ for each pole $b_k$. In this case, we have $r-|b_k|>k^{-2}$. 
If we choose $\delta=\frac{\varepsilon}{1+\varepsilon}$, $\alpha=1+\frac{1}{(\log T(r,f))^{1+\varepsilon}}$, and consider the restriction on $\omega(r)$, then by \eqref{eq6.2} and \cite[Lemma 3.3.1]{cherry2013nevanlinna}, we can obtain that the following inequality

\begin{equation}
    \label{estimate green b}
    \begin{aligned}
    &\sum_{|b_k|<s}\int_0^{2\pi}\sup_{\eta<\omega(r)}\left(G\left(\frac{re^{i\theta+i\eta}}{s},\frac{b_k}{s}\right)-G\left(\frac{re^{i\theta}}{s},\frac{b_k}{s}\right)\right)\frac{{\rm d}\theta}{2\pi}\\[6pt]
    &\leq\sum_{|b_k|<s}\frac{2\omega^\delta(r)}{\delta(1-\delta)%r^\delta
    }+\frac{\omega^{1+\delta}(r)r^\delta}{2\pi\delta (r-|b_k|)^\delta}\\[6pt]
    &\leq\sum_{|b_k|<s}\frac{2\omega^\delta(r)}{\delta(1-\delta)%r^\delta
    }+\frac{\omega^{1+\delta}(r)r^\delta k^{2\delta}}{2\pi\delta}\\[6pt]
    &\leq C_1\frac{\omega^{1+\delta}(r)r^\delta n(s,f)^{2\delta}}{\delta}n(s,f)\\[6pt]
    &\leq 2C_1\frac{\omega^{1+\delta}(r)r^\delta n(s,f)^{2\delta}}{\delta(\alpha-1)}T(\alpha r,f)\\[6pt]
    &\leq 2C_1\frac{r^\delta (\log T(r,f))^{1+\varepsilon}T(\alpha r,f)}{\delta%(\alpha-1)
    r^{\delta(1+\delta)}% + (2-\delta)\varsigma(1+\epsilon)
    T(r+\varepsilon,f^\prime)^{\frac{\varepsilon}{1+\varepsilon}}}\\[6pt]
    %&\leq C_2\frac{r^{\varsigma(1+\epsilon)}}{r^{\delta^2+\varsigma(2-\delta)(1+\varepsilon)}}T(r,f)\\[6pt]
    %&= C_2\frac{T(r,f)}{r^{\delta^2+\varsigma(1-\delta)(1+\epsilon)}}=
    &\leq C_2\frac{T(r,f)}{r^{\frac{\varepsilon^2}{(1+\varepsilon)^2}%+\varsigma\varepsilon
    }}=S(r,f)
    \end{aligned}
\end{equation}
holds for all $r$ outside a set of finite linear measure where $C_1$ and $C_2$ are positive constants, and since we can consider only $r\in[1,\infty)$, the exceptional set also possesses finite logarithmic measure). Similarly, from \eqref{eq6.1} and \eqref{estimate. Poisson kernel}, we find that
\begin{equation}
    \label{Eq6.1}
    \begin{aligned}
        &\sum_{|a_k|<s}\int_0^{2\pi}\sup_{\eta<\omega(r)}\left(G\left(\frac{re^{i\theta}}{s},\frac{a_k}{s}\right)-G\left(\frac{re^{i\theta+i\eta}}{s},\frac{a_k}{s}\right)\right)\frac{{\rm d}\theta}{2\pi}\\
        \le&\sum_{|a_k|<s}\frac{2\omega^\delta(r)}{\delta(1-\delta)%r^\delta
        }+\frac{2^{\delta-1}\omega^{1+\delta}(r)}{\pi\delta (\alpha-1)^\delta}\\
        \le&C_3\frac{\omega^{1+\delta}(r)}{\delta(\alpha-1)^{1+\delta}}T(\alpha r,f)\\
        \le&C_4\frac{T(r,f)}{r^{\delta(1+\delta)%+(1-\delta^2)(1+\varepsilon)\varsigma
        }}\\
        =&S(r,f),
    \end{aligned}
\end{equation}
and
\begin{equation}
    \label{Eq6.2}
    \begin{aligned}
        &\int_0^{2\pi} \sup_{\eta < \omega(r)}\int_0^{2\pi}\log|f'(s e^{i\psi})|\left(P\left(\frac{re^{i\theta+i\eta}}{s},\psi\right)-P\left(\frac{re^{i\theta}}{s},\psi\right)\right)\frac{{\rm d}\psi}{2\pi}\frac{{\rm d}\theta}{2\pi}\\
        \leq& C_5\frac{T(r,f')}{r^{\frac{\varepsilon}{1+\varepsilon}}}=S(r,f),
    \end{aligned}
\end{equation}
hold for all $r$ outside a set of finite linear measure, respectively, where $C_3$, $C_4$, and $C_5$ are positive constants.

By \eqref{estimate green b}, \eqref{Eq6.1} and \eqref{Eq6.2}, we can obtain the following estimate,
$$m\left(r,\frac{f(e^{i\omega(r)}z)-f(z)}{f'}\right)=S(r,f)$$

holds for all $r$ outside an exceptional set with finite logarithmic measure.

\section{Consequences}

\subsection{Deficiencies of meromorphic functions}

\begin{proposition}
    \label{p5.1} 
    Let $f$ be a transcendental meromorphic function of finite order that is non-constant. And let $r=|z|$ such that $0<|\eta|<\alpha_1(r)$, where
    $$
    \alpha_1(r)=\min\left\{r,\log^{-\frac{1}{2}}r,\frac h2,\frac1{\sum_{0<|b_\mu|<r+\frac12}1/|b_\mu|}\right\},
    $$
    where $(b_\mu)_{\mu\in\mathbb{N}}$ is the sequence of poles of $f (z)$, and let $h \in (0,1)$ be such that $f (z)$ has no poles in $\overline{D}(0,h)\setminus\{0\}$. Then
    $$
    \delta(a,f')\leq\left(1+\limsup_{r\to\infty}\frac{N(r,f)}{T(r,f')}\right)\delta(a\eta,\Delta_\eta f).
    $$
    In particular, when $f$ is an entire function, we have
    $$
    \delta(a,f')\leq\delta\left(a,\frac{\Delta_\eta f}\eta\right).
    $$
\end{proposition}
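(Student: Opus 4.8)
The plan is to combine Theorem~\ref{t1.1} with the elementary calculus of Nevanlinna's proximity, counting and characteristic functions, treating $\eta=\eta(r)$ throughout. The starting point is the factorisation
\[
\frac{1}{f'-a}=\frac{1}{\Delta_\eta f-a\eta}\cdot\frac{\Delta_\eta f-a\eta}{f'-a},
\]
which, by subadditivity of the proximity function, gives
\[
m\left(r,\frac{1}{f'-a}\right)\le m\left(r,\frac{1}{\Delta_\eta f-a\eta}\right)+m_\eta\left(r,\frac{\Delta_\eta f-a\eta}{f'-a}\right).
\]
Since $\alpha_1(r)\le\log^{-1/2}r\to0$ as $r\to\infty$, the hypothesis $0<|\eta|<\alpha_1(r)$ places us exactly in the setting of the second part of Theorem~\ref{t1.1}, so the last term is $o(1)$; hence $m(r,1/(f'-a))\le m(r,1/(\Delta_\eta f-a\eta))+o(1)$.

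The second ingredient is the comparison $T(r,\Delta_\eta f)\le T(r,f')+N(r,f)+O(1)$. Writing $T(r,\Delta_\eta f)=m(r,\Delta_\eta f)+N(r,\Delta_\eta f)$, I would estimate the proximity part by rewriting $\Delta_\eta f=\frac{\Delta_\eta f-a\eta}{f'-a}(f'-a)+a\eta$ and invoking Theorem~\ref{t1.1} once more, which yields $m(r,\Delta_\eta f)\le m(r,f')+O(1)$. For the counting part, every pole of $\Delta_\eta f$ is a pole of $f(z+\eta)$ or of $f(z)$, so $N(r,\Delta_\eta f)\le N(r+|\eta|,f)+N(r,f)$, and it remains to show $N(r+|\eta|,f)-N(r,f)=O(1)$. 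This is precisely where the last term in the definition of $\alpha_1(r)$ enters: each pole $b_\mu$ with $0<|b_\mu|<r+\tfrac12$ contributes more than $1/(r+\tfrac12)$ to $\sum 1/|b_\mu|$, so the bound $|\eta|<1/\sum_{0<|b_\mu|<r+1/2}1/|b_\mu|$ forces $|\eta|\bigl(n(r+|\eta|,f)-n(0,f)\bigr)<r+\tfrac12$ for large $r$; substituting this into $N(r+|\eta|,f)-N(r,f)=\int_r^{r+|\eta|}\frac{n(x,f)-n(0,f)}{x}\,dx+O(|\eta|/r)$ gives $1+o(1)$. Finally, the identity $N(r,f')=N(r,f)+\overline N(r,f)\ge N(r,f)$ absorbs one copy of $N(r,f)$: indeed $T(r,\Delta_\eta f)\le m(r,f')+2N(r,f)+O(1)=T(r,f')-N(r,f')+2N(r,f)+O(1)\le T(r,f')+N(r,f)+O(1)$.

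To conclude, choose a sequence $r_n\to\infty$ along which $m(r_n,1/(\Delta_\eta f-a\eta))/T(r_n,\Delta_\eta f)\to\delta(a\eta,\Delta_\eta f)$. Combining the two ingredients and dividing by $T(r_n,f')$ (which tends to $\infty$ since $f'$ is transcendental),
\[
\frac{m(r_n,1/(f'-a))}{T(r_n,f')}\le\frac{m(r_n,1/(\Delta_\eta f-a\eta))}{T(r_n,\Delta_\eta f)}\cdot\frac{T(r_n,\Delta_\eta f)}{T(r_n,f')}+\frac{o(1)}{T(r_n,f')},
\]
and letting $n\to\infty$, using $T(r_n,\Delta_\eta f)/T(r_n,f')\le1+N(r_n,f)/T(r_n,f')+o(1)$ together with the fact that $\delta(a,f')=\liminf_r(\cdot)$ does not exceed the value along any subsequence, we obtain
\[
\delta(a,f')\le\delta(a\eta,\Delta_\eta f)\left(1+\limsup_{r\to\infty}\frac{N(r,f)}{T(r,f')}\right).
\]
When $f$ is entire, $N(r,f)\equiv N(r,\Delta_\eta f)\equiv N(r,f')\equiv0$, so the comparison collapses to $T(r,\Delta_\eta f)=m(r,\Delta_\eta f)\le m(r,f')+O(1)=T(r,f')+O(1)$, whence $\delta(a,f')\le\delta(a\eta,\Delta_\eta f)$; and since $\Delta_\eta f/\eta-a=(\Delta_\eta f-a\eta)/\eta$ shows $\delta(a\eta,\Delta_\eta f)=\delta(a,\Delta_\eta f/\eta)$, the ``in particular'' assertion follows.

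I expect the main obstacle to be the estimate $N(r+|\eta|,f)-N(r,f)=O(1)$: it must be extracted carefully from the precise shape of $\alpha_1(r)$, with attention to the possible pole at the origin (which is why the condition on $h$ and the restriction $|\eta|<h/2$ are imposed) and to comparing $n$ evaluated at $r+|\eta|$ with $n$ evaluated at $r+\tfrac12$; note that the finite-order hypothesis alone is insufficient here when $\rho(f)>1$, so the summatory restriction on $|\eta|$ is genuinely needed. A secondary, routine point is the $\liminf$/$\limsup$ bookkeeping when transferring the deficiency of $\Delta_\eta f$ to that of $f'$, together with verifying that the degenerate case $\Delta_\eta f\equiv a\eta$ (equivalently, $f-az$ being $\eta$-periodic) does not arise under the stated hypotheses, or handling it separately.
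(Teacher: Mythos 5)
Your argument is correct and follows the paper's overall strategy: factor $1/(f'-a)$, apply Theorem~\ref{t1.1} twice to make both $m\bigl(r,(\Delta_\eta f-a\eta)/(f'-a)\bigr)$ and its $a=0$ counterpart tend to $0$, compare $T(r,\Delta_\eta f)$ with $T(r,f')$ by absorbing one copy of $N(r,f)$ via $N(r,f')\ge N(r,f)$, and close with the usual $\liminf/\limsup$ product bookkeeping. The one technical point you handle differently is the control on the shift of the counting function. The paper invokes Chiang--Luo's Theorem~2.3 from \cite{MR3695346}, which gives $N_\eta(r,f(z+\eta))=N(r,f)+\varepsilon_1(r)$ with $|\varepsilon_1(r)|\le n(0,f)\log r+3$; the admissibility window $\alpha_1(r)$ and the role of $h$ are imported wholesale from the hypotheses of that theorem. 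You instead derive $N(r+|\eta|,f)-N(r,f)=O(1)$ directly from the summatory condition $|\eta|<\bigl(\sum_{0<|b_\mu|<r+1/2}1/|b_\mu|\bigr)^{-1}$: each pole with $0<|b_\mu|<r+\tfrac12$ contributes at least $(r+\tfrac12)^{-1}$ to the sum, so $|\eta|\bigl(n(r+|\eta|,f)-n(0,f)\bigr)<r+\tfrac12$ once $|\eta|<\tfrac12$, and inserting this in $\int_r^{r+|\eta|}\tfrac{n(t,f)-n(0,f)}{t}\,\mathrm{d}t+n(0,f)\log(1+|\eta|/r)$ yields $O(1)$. This is self-contained and slightly sharper ($O(1)$ rather than $O(\log r)$), at the cost of leaving opaque where the specific shape of $\alpha_1(r)$ comes from (it is precisely Chiang--Luo's requirement). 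Both routes give the same deficiency inequality, and the ``in particular'' clause follows in either because $N\equiv 0$ for entire $f$ and $\log(1/|\eta|)=o(T(r,f'))$, so dividing $\Delta_\eta f$ by $\eta$ does not change the deficiency.
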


\begin{proof}
    With the notation 
    \begin{equation}
        \label{e5.2}
        A(a,r)=m\left(r,\frac{\Delta_\eta f-a\eta}{f^\prime-a}\right)
    \end{equation}
we have
$$
m\left(r,\frac{1}{f'-a}\right)\leq m\left(r,\frac{1}{\Delta_\eta f-a\eta }\right)+A(a,r)\quad\text{and}\quad m(r,\Delta_\eta f)\leq m(r,f')+A(0,r).
$$
Thus, for all large enough $r>0$
$$
\begin{aligned}
\frac{T(r,\Delta_\eta f)}{T(r,f')}&\leq\frac{m(r,f')+N(r,f)+N_\eta(r,f(z+\eta ))}{T(r,f')}+\frac{A(0,r)}{T(r,f')} \\
&\leq1+\frac{N(r,f)}{T(r,f')}+\frac{O(\log r)}{T(r,f')}+\frac{A(0,r)}{T(r,f')},
\end{aligned}
$$
where we have used the following estimate \cite[Theorem 2.3]{MR3695346}
$$
N_\eta(r,f(z+\eta))=N(r,f(z))+\varepsilon_1(r),
$$
where $|\varepsilon_1(r)|\leq n(0,f(z))\log r+3$. Thus,
$$
\begin{aligned}
\liminf_{r\to\infty}\frac{m\left(r,\frac{1}{f'-a}\right)}{T(r,f')}&\leq\liminf_{r\to\infty}\left(\frac{m\left(r,\frac{1}{\Delta_\eta f-a\eta }\right)}{T(r,\Delta_\eta f)}\frac{T(r,\Delta_\eta f)}{T(r,f')}+\frac{A(a,r)}{T(r,f')}\right) \\
&\leq\liminf_{r\to\infty}\Bigg[\left(1+\frac{N(r,f)}{T(r,f')}+\frac{O(\log r)+A(0,r)}{T(r,f')}\right) \\
&\quad\times\frac{m\left(r,\frac{1}{\Delta_\eta f-a\eta }\right)}{T(r,\Delta_\eta f)}+\frac{A(a,r)}{T(r,f')}\Bigg].
\end{aligned}
$$
By using the result of Theorem \ref{t1.1}, we immediately get the ﬁrst statement of the proposition, and the second statement is an immediate consequence of the first.
\end{proof}

\begin{proposition}
    \label{p5.2}
    Let $0<\beta<\frac{1}{2}$, $0<|\omega(r)|<r^\beta$, and let $f$ be a transcendental meromorphic function of finite order $\rho<\infty$ that is not $\omega$-periodic. If the lower order $\mu(f^\prime)$ of $f^\prime$ satisfies $\mu(f^\prime)>\rho(f)-\frac{1}{2}$, then
    $$
    \delta(a,f')\leq\left(1+\limsup_{r\to\infty}\frac{N(r,f)}{T(r,f')}\right)\delta(a\omega,\Delta_\omega f).
    $$
    In particular, when $f$ is an entire function, we have
    $$
    \delta(a,f')\leq\delta\left(a,\frac{\Delta_\omega f}\omega\right).
    $$
\end{proposition}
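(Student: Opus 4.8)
The plan is to adapt the argument used for Proposition \ref{p5.1}, replacing the vanishing shift $\eta$ by the unbounded shift $\omega=\omega(r)$ and Theorem \ref{t1.1} by Theorem \ref{t1.2}. Since $f$ has finite order, its hyper-order equals $0$, so the hypotheses of Theorem \ref{t1.2} hold with $\varsigma=0$ (the constraint $0<\beta<\min\{\tfrac12-\tfrac12\varsigma,1-\tfrac43\varsigma\}$ becomes exactly $0<\beta<\tfrac12$). Writing
$$
A(a,r)=m\left(r,\frac{\Delta_\omega f-a\omega}{f'-a}\right)
$$
as in \eqref{e5.2}, Theorem \ref{t1.2} then gives $A(a,r)=S(r,f')$ for every $a\in\mathbb{C}$; in particular $A(0,r)=S(r,f')$. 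Here $T(r,f')\to\infty$, because $f$, and hence $f'$, is transcendental, so these error terms are genuinely $o(T(r,f'))$.

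First I would record the two elementary consequences of the subadditivity of the proximity function,
$$
m\left(r,\frac1{f'-a}\right)\le m\left(r,\frac1{\Delta_\omega f-a\omega}\right)+A(a,r),\qquad m(r,\Delta_\omega f)\le m(r,f')+A(0,r),
$$
obtained by factoring $\frac1{f'-a}=\frac1{\Delta_\omega f-a\omega}\cdot\frac{\Delta_\omega f-a\omega}{f'-a}$ and $\Delta_\omega f=f'\cdot\frac{\Delta_\omega f}{f'}$. Next I would estimate the characteristic of $\Delta_\omega f$: from $T(r,\Delta_\omega f)=m(r,\Delta_\omega f)+N(r,\Delta_\omega f)$, the second inequality above, and $N(r,\Delta_\omega f)\le N(r,f)+N_\omega(r,f(z+\omega))$, everything reduces to controlling $N_\omega(r,f(z+\omega))$. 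A pole of $z\mapsto f(z+\omega)$ in $|z|<r$ lies at $b_\mu-\omega$ with $|b_\mu|<r+|\omega|<r+r^\beta$, so $N_\omega(r,f(z+\omega))\le N(r+r^\beta,f)+O(\log r)$, and since $f$ has finite order $\rho$ one gets $N(r+r^\beta,f)-N(r,f)=O(r^{\rho+\beta-1+\epsilon})$ for any $\epsilon>0$. Here the two numerical hypotheses enter: because $\beta<\tfrac12$ and $\mu(f')>\rho-\tfrac12$ we have $\rho+\beta-1<\rho-\tfrac12<\mu(f')$, so for $\epsilon$ small enough $r^{\rho+\beta-1+\epsilon}=o(r^{\mu(f')-\epsilon})=o(T(r,f'))$, whence $N_\omega(r,f(z+\omega))=N(r,f)+o(T(r,f'))$. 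Alternatively this step may be quoted from the appropriate unbounded-period form of \cite[Theorem 2.3]{MR3695346}. Combining this with $m(r,f')\le T(r,f')$ and $A(0,r)=S(r,f')$ yields
$$
\limsup_{r\to\infty}\frac{T(r,\Delta_\omega f)}{T(r,f')}\le1+\limsup_{r\to\infty}\frac{N(r,f)}{T(r,f')}.
$$

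Finally I would divide the first inequality above by $T(r,f')$, insert the factor $T(r,\Delta_\omega f)/T(r,\Delta_\omega f)$, and pass to $\liminf_{r\to\infty}$. Since $A(a,r)/T(r,f')\to0$, the factor $T(r,\Delta_\omega f)/T(r,f')$ is bounded with $\limsup$ at most $1+\limsup_r N(r,f)/T(r,f')$, and $\liminf_r m(r,1/(\Delta_\omega f-a\omega))/T(r,\Delta_\omega f)=\delta(a\omega,\Delta_\omega f)$, the bound $\delta(a,f')\le(1+\limsup_r N(r,f)/T(r,f'))\,\delta(a\omega,\Delta_\omega f)$ follows from $\liminf(x_ry_r)\le(\liminf x_r)(\limsup y_r)$ for $x_r\ge0$ and $y_r>0$ bounded. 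For entire $f$ one has $N(r,f)\equiv0$, and since $|\omega|<r^\beta$ makes $\log^+|\omega|=O(\log r)$ negligible compared with $T(r,\Delta_\omega f)$ in both the numerator and the denominator of the deficiency, $\delta(a\omega,\Delta_\omega f)=\delta(a,\Delta_\omega f/\omega)$, giving $\delta(a,f')\le\delta(a,\Delta_\omega f/\omega)$. I expect the only genuine obstacle to be the middle step: making precise the bound on $N_\omega(r,f(z+\omega))-N(r,f)$ within the variable-step framework of \cite{MR3695346} and verifying that the moving-step pole discrepancy stays of size $S(r,f')$ — this is exactly the point at which the conditions $\beta<\tfrac12$ and $\mu(f')>\rho-\tfrac12$ must be balanced against one another.
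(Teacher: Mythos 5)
Your proposal is correct and follows essentially the same route as the paper's proof: the same two proximity-function inequalities, the same reduction to bounding $T(r,\Delta_\omega f)/T(r,f')$ via a pole-counting discrepancy, and the same use of Theorem~\ref{t1.2} (with $\varsigma=0$) together with $\beta<\tfrac12$ and $\mu(f')>\rho(f)-\tfrac12$ to make the error terms $o(T(r,f'))$. The only difference is cosmetic: you derive the estimate $N(r,f(z+\omega))-N(r,f)=O(r^{\rho+\beta-1+\epsilon})+O(\log r)$ directly from $n(t,f)=O(t^{\rho+\epsilon})$, whereas the paper quotes \cite[Theorem 3.2]{MR3695346} for the bound $N(r,f(z+\omega))=N(r,f)+O(r^{\max\{\rho(f')-1,0\}+\beta+\epsilon})+O(\log r)$ — both are then consumed identically in the $\liminf$/$\limsup$ computation.
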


\begin{proof}
    Just like the proof of Proposition \ref{p5.1} above, with the notation
    \begin{equation}
        \label{e5.2*}
        A_*(a,r)=m\left(r,\frac{\Delta_\omega f-a\omega}{f^\prime-a}\right)
    \end{equation}
    we have
$$
m\left(r,\frac{1}{f'-a}\right)\leq m\left(r,\frac{1}{\Delta_\omega f-a\omega }\right)+A(a,r)\ \text{and}\  m(r,\Delta_\omega f)\leq m(r,f')+A(0,r).
$$
Similarly, for all large enough $r>0$,
$$
\begin{aligned}
\frac{T(r,\Delta_\omega f)}{T(r,f')}&\leq\frac{m(r,f')+N(r,f)+N_\eta(r,f(z+\omega ))}{T(r,f')}+\frac{A_*(0,r)}{T(r,f')} \\
&\leq1+\frac{N(r,f)}{T(r,f')}+\frac{O(r^{\max\{\rho(f^\prime)-1,0\}+\beta+\epsilon})+O(\log r)}{T(r,f')}+\frac{A_*(0,r)}{T(r,f')},
\end{aligned}
$$
where we have used the following estimate \cite[Theorem 3.2]{MR3695346}
$$
N(r,f(z+\omega))=N(r,f)+O(r^{\max\{\rho(f^\prime)-1,0\}+\beta+\epsilon})+O(\log r).
$$
Thus,
$$
\begin{aligned}
\liminf_{r\to\infty}\frac{m\left(r,\frac{1}{f'-a}\right)}{T(r,f')}&\leq\liminf_{r\to\infty}\left(\frac{m\left(r,\frac{1}{\Delta_\omega f-a\omega }\right)}{T(r,\Delta_\omega f)}\frac{T(r,\Delta_\omega f)}{T(r,f')}+\frac{A_*(a,r)}{T(r,f')}\right) \\
&\leq\liminf_{r\to\infty}\Bigg[\Bigg(1+\frac{N(r,f)}{T(r,f')}\\
&\quad+\frac{O(r^{\max\{\rho(f^\prime)-1,0\}+\beta+\epsilon})+O(\log r)+A_*(0,r)}{T(r,f')}\Bigg) \\
&\quad\times\frac{m\left(r,\frac{1}{\Delta_\omega f-a\omega }\right)}{T(r,\Delta_\omega f)}+\frac{A_*(a,r)}{T(r,f')}\Bigg].
\end{aligned}
$$
By using the result of Theorem \ref{t1.2} and the condition ``$\mu(f^\prime)>\rho(f)-\frac{1}{2}$", we immediately get the ﬁrst statement of the proposition, and the second statement is an immediate consequence of the first.
\end{proof}

\medskip

\subsection{$\eta/\omega$-separated pair indices of entire functions with finite order}
~

Now we recall here the definition of the $c$-separated pair index $\pi_c(a,f)$ for the value $a\in\mathbb{C}\bigcup\{\infty\}$ of a meromorphic function $f$ from \cite{MR2248826}.

\begin{definition}[$c$-separated pair indices]
    \label{d5.2}
    Let $f$ be a meromorphic function that is not $c$-periodic. %Let $0<|\eta|<\alpha(r)$ and $0<|\omega|<r^\beta$, where $\alpha(r)$ is an arbitrary term such that $\alpha(r)\to0$ as $r\to\infty$, and $\beta<\frac{1}{2}$. 
    Then the \textbf{$c$-separated pair index} of $a\in\mathbb{C}\bigcup\{\infty\}$ is defined as
    $$
    \pi_c(a,f)=\liminf_{r\to\infty}\frac{N_c(r,a,f)}{T(r,f)},%\ \text{and}\ \pi_\omega(a,f)=\liminf_{r\to\infty}\frac{N_\omega(r,a,f)}{T(r,f)},
    $$
    where the counting function
    $$
    N_c(r,a,f)=\int_0^r\frac{n_c(t,a,f)-n_c(0,a,f)}{t}\mathrm{d}t+n_c(0,a,f)\log r
    $$
    is defined such that $n_c(r, a,f)$ %(where $*$ is $\eta$ or $\omega$) 
    counts a point $|z_0|\le r$ with $f(z_0)=a=f(z_0+c)$ (such points are called $c$-separated $a$-pairs) according to the number of equal terms at the beginning of the Taylor expansions of $f$ at $z_0$ and at $z_0+c$. For poles $n_c(r,f)$ counts the $c$-separated $0$-pairs of $1/f$.
\end{definition}

By replacing \( c \) with \( \eta \) or \( \omega \), as defined in Theorems \ref{t1.1} and \ref{t1.2}, respectively, we obtain the deficiency relations associated with the \( \eta/\omega \)-separated pair indices in the case of finite-order entire functions.

\begin{proposition}
    \label{p5.3}
    Let $\eta$ be the same as defined in Theorem \ref{t1.1}. Next, we substitute $c$ with $\eta$ in Definition \ref{d5.2}, and assume that $f$ is a non-constant entire function of finite order. Then we have
    $$
    \sum_{a\in\mathbb{C}}\pi_\eta(a,f)\leq1-\delta(0,f^{\prime}).
    $$
\end{proposition}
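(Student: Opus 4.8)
The plan is to pass from the $\eta$-separated pairs of $f$ to the zeros of $\Delta_\eta f$, then to compare these with the zeros of $f'$ using Jensen's formula together with Theorem~\ref{t1.1}, and finally to feed the resulting counting-function estimate into the definition of $\pi_\eta$. For the first step, suppose $z_0$ with $|z_0|\le r$ is an $\eta$-separated $a$-pair of $f$ counted with weight $k$ in $n_\eta(r,a,f)$, i.e. the Taylor expansions $f(z_0+w)=\sum_{j\ge0}c_jw^j$ and $f(z_0+\eta+w)=\sum_{j\ge0}c_j'w^j$ satisfy $c_j=c_j'$ for $0\le j\le k-1$ and $c_k\ne c_k'$. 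Then $\Delta_\eta f(z_0+w)=\sum_{j\ge k}(c_j'-c_j)w^j$ has a zero of order exactly $k$ at $z_0$; conversely every zero of $\Delta_\eta f$ is an $\eta$-separated $a$-pair of $f$ with $a=f(z_0)$ (here we use that $f$, being non-constant, is not $\eta$-periodic, so that $\Delta_\eta f\not\equiv0$). Summing over $a\in\mathbb{C}$ and integrating, $\sum_{a\in\mathbb{C}}n_\eta(r,a,f)\le n(r,1/\Delta_\eta f)$, and hence $\sum_{a\in\mathbb{C}}N_\eta(r,a,f)\le N(r,1/\Delta_\eta f)+O(\log r)$.

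The crux is the second step. Since $f$ is entire, so are $f'$ and $\Delta_\eta f$, so Jensen's formula (equivalently, the first main theorem applied to $1/\Delta_\eta f$ and to $1/f'$, whose poles are absent) gives
$$N\left(r,\frac{1}{\Delta_\eta f}\right)=m(r,\Delta_\eta f)-m\left(r,\frac{1}{\Delta_\eta f}\right)+O(\log r),\qquad N\left(r,\frac{1}{f'}\right)=m(r,f')-m\left(r,\frac{1}{f'}\right)+O(\log r).$$
Subtracting these, and using $m(r,\Delta_\eta f)\le m(r,f')+m(r,\Delta_\eta f/f')$ and $m(r,1/f')\le m(r,1/\Delta_\eta f)+m(r,\Delta_\eta f/f')$, the two ``large'' proximity terms of $\Delta_\eta f$ and of $f'$ cancel and one is left with $N(r,1/\Delta_\eta f)\le N(r,1/f')+2m(r,\Delta_\eta f/f')+O(\log r)$. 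Since $\eta$ is taken as in Theorem~\ref{t1.1}, that theorem (with $a=0$) yields $m(r,\Delta_\eta f/f')\to0$ as $r\to\infty$, whence $N(r,1/\Delta_\eta f)\le N(r,1/f')+O(\log r)$.

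For the last step, $f$ transcendental entire of finite order gives, via the lemma on the logarithmic derivative, $T(r,f')=m(r,f')\le m(r,f)+m(r,f'/f)=T(r,f)+O(\log r)$, so $T(r,f)\ge(1+o(1))T(r,f')$; and, $f'$ being entire, $\limsup_{r\to\infty}N(r,1/f')/T(r,f')=1-\delta(0,f')$. Combining these with the two previous steps, and using that $\sum_a\liminf\le\liminf\sum_a$ holds for every finite partial sum (hence for the full sum), together with $N(r,1/f')\le T(r,f')+O(1)$,
$$\sum_{a\in\mathbb{C}}\pi_\eta(a,f)\le\liminf_{r\to\infty}\frac{\sum_{a\in\mathbb{C}}N_\eta(r,a,f)}{T(r,f)}\le\liminf_{r\to\infty}\frac{N\left(r,\frac{1}{f'}\right)+O(\log r)}{T(r,f)}\le\limsup_{r\to\infty}\frac{N\left(r,\frac{1}{f'}\right)}{T(r,f')}=1-\delta(0,f').$$

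I expect the main difficulty to be the second step. The naive estimates --- $N(r,1/\Delta_\eta f)\le T(r,\Delta_\eta f)+O(1)\le T(r,f')+m(r,\Delta_\eta f/f')+O(1)$, or splitting $1/\Delta_\eta f=(1/f')(f'/\Delta_\eta f)$ and using subadditivity of the counting function --- only give $N(r,1/\Delta_\eta f)\le 2N(r,1/f')+o(T(r,f))$ (or $\le T(r,f')+o(T(r,f))$), which is too weak to reach $1-\delta(0,f')$; it is precisely the cancellation of $m(r,1/\Delta_\eta f)$ against $m(r,1/f')$ in the two Jensen identities that removes the spurious factor. A secondary, bookkeeping point is to fix, in the spirit of Theorem~\ref{t1.1} and Proposition~\ref{p5.1}, the admissible dependence of $\eta$ on $r$ (with $|\eta|<\alpha(r)\to0$) under which $m(r,\Delta_\eta f/f')\to0$; note that this is the only property of $\eta$ used in the second step, while the first step holds for any $\eta$ with $f$ non-$\eta$-periodic.
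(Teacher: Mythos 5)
Your proof is correct and follows essentially the same route as the paper's: Fatou's lemma to interchange $\sum_{a}$ and $\liminf_r$, the identification of $\sum_a n_\eta(r,a,f)$ with the zero-counting function of $\Delta_\eta f$, a Jensen-formula comparison of $N(r,1/\Delta_\eta f)$ with $N(r,1/f')$ that is controlled by $m(r,\Delta_\eta f/f')\to0$ from Theorem~\ref{t1.1}, and the logarithmic-derivative bound $T(r,f')\le T(r,f)+O(\log r)$. The only difference is cosmetic: you pass directly from $\liminf$ to $\limsup$ at the end, whereas the paper routes the same estimate through $\Theta(0,f')$, $\theta(0,f')$ and the reduced counting function $\overline N(r,1/f')$, a detour that your version cleanly avoids.
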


\begin{proof}
    Applying the continuous variable variant of Fatou’s lemma with the counting measure, we have
    $$
    \sum_{a\in\mathbb{C}}\pi_\eta(a,f)=\sum_{a\in\mathbb{C}}\liminf_{r\to\infty}\frac{N_\eta(r,a,f)}{T(r,f)}\leq\liminf_{r\to\infty}\sum_{a\in\mathbb{C}}\frac{N_\eta(r,a,f)}{T(r,f)}.
    $$
    Let $A$ be defined the same as in \eqref{e5.2}, so that we obtain
    $$
    \begin{aligned}
&\sum_{a\in\mathbb C}\pi_\eta(a,f)+\Theta(0,f')\leq\liminf_{r\to\infty}\sum_{a\in\mathbb C}\frac{N_\eta(r,a,f)}{T(r,f)}-\lim\sup_{r\to\infty}\frac{N(r,1/f')}{T(r,f')}+1 \\
&\leq\liminf_{r\to\infty}\frac{N(r,1/\Delta_\eta f)}{T(r,f')}\frac{m(r,f')}{m(r,f)}+\liminf_{r\to\infty}\frac{-\overline{N}(r,1/f')}{T(r,f')}+1 \\
&\leq\liminf_{r\to\infty}\frac{N(r,1/\Delta_\eta f)}{T(r,f')}+\liminf_{r\to\infty}\frac{-\overline{N}(r,1/f')}{T(r,f')}+1\\
&\leq\liminf_{r\to\infty}\frac{\int_0^{2\pi}\log\left|\Delta_\eta f\left(re^{i\theta}\right)\right|\frac{\mathrm{d}\theta}{2\pi}+O(1)}{T(r,f')}+\liminf_{r\to\infty}\frac{-\overline{N}(r,1/f')}{T(r,f')}+1\\
&=\liminf_{r\to\infty}\frac{\int_0^{2\pi}\log\left|f^\prime\left(re^{i\theta}\right)\right|\frac{\mathrm{d}\theta}{2\pi}+\int_0^{2\pi}\log\left|\frac{\Delta_\eta f\left(re^{i\theta}\right)}{f^\prime\left(re^{i\theta}\right)}\right|\frac{\mathrm{d}\theta}{2\pi}}{T(r,f')}+\liminf_{r\to\infty}\frac{-\overline{N}(r,1/f')}{T(r,f')}+1\\
&\leq\lim\inf_{r\to\infty}\frac{N(r,1/f')-\overline{N}(r,1/f')+A(0,r)}{T(r,f')}+1=\theta(0,f')+1,\end{aligned}
    $$
    from which the statement immediately follows.
\end{proof}

\begin{proposition}
    \label{p5.4}
    Let $\omega$ be the same as be the same as defined in Theorem \ref{t1.2}. Next, we substitute $c$ with $\omega$ in Definition \ref{d5.2}, and let $f$ be a non-constant entire function of finite order. Then we have
    $$
    \sum_{a\in\mathbb{C}}\pi_\omega(a,f)\leq1-\delta(0,f^{\prime}).
    $$
\end{proposition}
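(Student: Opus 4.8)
The plan is to adapt the proof of Proposition \ref{p5.3} almost verbatim, with Theorem \ref{t1.2} playing the role that Theorem \ref{t1.1} plays there. I would first apply the continuous-variable form of Fatou's lemma against the counting measure on $\mathbb{C}$ to interchange $\liminf$ and sum,
$$
\sum_{a\in\mathbb{C}}\pi_\omega(a,f)=\sum_{a\in\mathbb{C}}\liminf_{r\to\infty}\frac{N_\omega(r,a,f)}{T(r,f)}\leq\liminf_{r\to\infty}\sum_{a\in\mathbb{C}}\frac{N_\omega(r,a,f)}{T(r,f)},
$$
and then note that, at radius $r$, the $\omega(r)$-separated $a$-pairs of $f$ are precisely the zeros of the entire function $z\mapsto f(z+\omega(r))-f(z)$, that the multiplicity convention of Definition \ref{d5.2} records exactly the order of each such zero, and that distinct values of $a$ contribute disjoint point sets; hence $\sum_{a\in\mathbb{C}}N_\omega(r,a,f)=N(r,1/\Delta_\omega f)+O(\log r)$.

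Since $f$ is entire, $T(r,f)=m(r,f)$ and $T(r,f')=m(r,f')$, and $\Delta_\omega f$ is, on the circle $|z|=r$, the restriction of an entire function, so Jensen's formula gives $N(r,1/\Delta_\omega f)\le m(r,\Delta_\omega f)+S(r,f)$ outside a set of finite logarithmic measure. Splitting off $f'$ yields
$$
m(r,\Delta_\omega f)\le m(r,f')+m\!\left(r,\frac{\Delta_\omega f}{f'}\right)=m(r,f')+A_*(0,r),
$$
with $A_*$ as in \eqref{e5.2*}, and by Theorem \ref{t1.2} one has $A_*(a,r)=S(r,f')$ for every $a\in\mathbb{C}$ outside a set of finite logarithmic measure. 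The logarithmic derivative lemma bounds the factor $m(r,f')/m(r,f)$ by $1+o(1)$, and since $f$ has finite order all of these error terms are $S(r,f')$, hence negligible under the $\liminf$. Substituting into the exact chain of inequalities from the proof of Proposition \ref{p5.3} --- which invokes the first main theorem $N(r,1/f')=T(r,f')-m(r,1/f')+O(1)$ and the relation $\Theta(0,f')\ge\delta(0,f')+\theta(0,f')$ --- gives $\sum_{a\in\mathbb{C}}\pi_\omega(a,f)+\Theta(0,f')\le\theta(0,f')+1$, i.e.\ $\sum_{a\in\mathbb{C}}\pi_\omega(a,f)\le1-\delta(0,f')$, as claimed.

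The only genuinely new input beyond Proposition \ref{p5.3} is Theorem \ref{t1.2}; the step I expect to require real care is the bookkeeping forced by the $r$-dependent shift. One must fix the convention that, at radius $r$, both $\Delta_\omega f$ and $N_\omega(r,a,f)$ are built from the \emph{fixed} value $\omega(r)$, verify that the identity $\sum_{a\in\mathbb{C}}N_\omega(r,a,f)=N(r,1/\Delta_\omega f)+O(\log r)$ survives this convention with the Definition \ref{d5.2} multiplicities, and check that the Jensen correction (the logarithm of the leading Taylor coefficient of $\Delta_\omega f$ at the origin, which now varies with $r$) is $S(r,f)$; this is where the finite-order hypothesis and the restriction $0<|\omega(r)|<r^\beta$ with $\beta<\tfrac12$ are used. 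Finally, the exceptional sets arising from Theorem \ref{t1.2}, from the Jensen and first-main-theorem corrections, and from the $\liminf$ manipulations are each of finite logarithmic measure and are merged into a single such set.
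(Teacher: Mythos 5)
Your proposal is correct and follows essentially the same route as the paper: the paper in fact omits the proof of Proposition \ref{p5.4} entirely, remarking only that it is ``almost identical to that of Proposition \ref{p5.3},'' and your argument is precisely that adaptation, replacing $\eta$ by $\omega$, invoking Theorem \ref{t1.2} in place of Theorem \ref{t1.1} to get $A_*(a,r)=S(r,f')$, and otherwise running the same Fatou/Jensen/first-main-theorem chain that yields $\sum_a\pi_\omega(a,f)+\Theta(0,f')\le\theta(0,f')+1$. The $r$-dependent Jensen constant $-\log|\Delta_\omega f(0)|$ that you flag as needing care is indeed a point the paper's proof of Proposition \ref{p5.3} also passes over silently (writing only ``$+O(1)$''), so you have identified a real subtlety rather than introduced a gap.
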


The proof of Proposition \ref{p5.4} is almost identical to that of Proposition \ref{p5.3} above, so we will omit the details.

\medskip

%\iffalse
\subsection{Analogue of 2nd main theorem}
~

\begin{proposition}
    \label{Paper 3 p5}
    Assume that a meromorphic function $f$, satisfying the conditions of Theorem \ref{t1.1}, possesses a meromorphic primitive $F$. Let $a_j$ be distinct linear functions of $z$, and suppose their derivatives $a_j'$ are also distinct. Under these conditions, we obtain the following estimate:
$$
\begin{aligned}
    \sum_{j=1}^qm\left(r,\frac{1}{f-a_j^\prime}\right)&\le% m\left(r, \frac{1}{\Delta_\eta^2F}\right)+m\left(r,\sum_{j=1}^q\frac{\Delta_\eta^2F}{f-a_j^\prime}\right)+O(1)\\
    %&\le T(r,\Delta_\eta^2F)-N\left(r, \frac{1}{\Delta_\eta^2F}\right)+S(r,f)\\
    %&\le m(r,f)+m\left(r, \frac{\Delta_\eta^2F}{f}\right)+N(r,\Delta_\eta^2F)\\
    %&\quad-N\left(r, \frac{1}{\Delta_\eta^2F}\right)+S(r,f)\\
    %&\le m(r,f)+m\left(r, \frac{\Delta_\eta^2F}{f^\prime}\right)+m\left(r, \frac{f^\prime}{f^\prime}\right)+N(r,\Delta_\eta^2F)\\
    %&\quad-N\left(r, \frac{1}{\Delta_\eta^2F}\right)+S(r,f)\\
    %&=
    m(r,f)+N(r,\Delta_\eta^2F)-N\left(r, \frac{1}{\Delta_\eta^2F}\right)+S(r,f),
\end{aligned}
$$
outside an exceptional set of finite logarithmic measure.
\end{proposition}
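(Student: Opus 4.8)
The plan is to imitate the standard proof of the second main theorem up to the ramification term --- in its proximity-function form $\sum_{j}m(r,1/(f-c_j))\le m(r,1/f')+S(r,f)$ --- with the second difference $\Delta_\eta^2 F$ in place of the derivative $f'$ and with Theorem \ref{t1.1} playing the role of the lemma on the logarithmic derivative. Write $a_j(z)=c_jz+d_j$, so that the constants $c_j=a_j'$ are distinct, and put $G_j:=F-a_j$. Then $G_j'=f-a_j'$, and since each $a_j$ is affine (so that $\Delta_\eta^2 a_j\equiv 0$) we have $\Delta_\eta^2 G_j=\Delta_\eta^2 F$ for every $j$; we may also assume $\Delta_\eta^2 F\not\equiv 0$, as otherwise $f$ would be constant. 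All the work will be in showing that each quotient $\Delta_\eta^2 G_j/G_j'$ has proximity function $S(r,f)$.

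First I would use the elementary pole-separation estimate for the distinct values $c_1,\dots,c_q$, namely $\sum_{j=1}^q m(r,1/(f-c_j))\le m\big(r,\sum_{j=1}^q 1/(f-c_j)\big)+O(1)$ (on the part of $|z|=r$ where $f$ is near a fixed $c_k$ the $k$th summand dominates). Then, using $f-a_j'=G_j'$ together with $\Delta_\eta^2 F=\Delta_\eta^2 G_j$, I would write $\sum_{j}1/(f-a_j')=(\Delta_\eta^2 F)^{-1}\sum_{j}\Delta_\eta^2 G_j/G_j'$, which gives
\[
m\Big(r,\sum_{j=1}^q\tfrac{1}{f-a_j'}\Big)\le m\Big(r,\tfrac{1}{\Delta_\eta^2 F}\Big)+\sum_{j=1}^q m\Big(r,\tfrac{\Delta_\eta^2 G_j}{G_j'}\Big)+O(\log q).
\]
By the first main theorem applied to $\Delta_\eta^2 F$, $m(r,1/\Delta_\eta^2 F)=m(r,\Delta_\eta^2 F)+N(r,\Delta_\eta^2 F)-N(r,1/\Delta_\eta^2 F)+O(1)$, and from the factorization $\Delta_\eta^2 F=(\Delta_\eta^2 G_j/G_j')(f-a_j')$ together with $m(r,f-a_j')\le m(r,f)+O(1)$ we get $m(r,\Delta_\eta^2 F)\le m(r,\Delta_\eta^2 G_j/G_j')+m(r,f)+O(1)$. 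Hence the whole proposition reduces to the single estimate $m(r,\Delta_\eta^2 G_j/G_j')=S(r,f)$ for each $j$.

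To prove that estimate I would telescope:
\[
\frac{\Delta_\eta^2 G_j}{G_j'}=\frac{\Delta_\eta^2 F}{\Delta_\eta f}\cdot\frac{\Delta_\eta f}{f'}\cdot\frac{f'}{f-c_j},
\]
which is legitimate since the product equals $\Delta_\eta^2 F/(f-c_j)=\Delta_\eta^2 G_j/G_j'$. Now $\Delta_\eta^2 F/\Delta_\eta f=\Delta_\eta(\Delta_\eta G_j)/(\Delta_\eta G_j)'$, so it is the quotient of difference and derivative for the meromorphic function $\Delta_\eta G_j$ (with the value $0$), and Theorem \ref{t1.1} makes its proximity function small; likewise $\Delta_\eta f/f'$ is that quotient for $f$ itself, so Theorem \ref{t1.1} again makes its proximity function small; and $f'/(f-c_j)=(f-c_j)'/(f-c_j)$ is an ordinary logarithmic derivative, whose proximity function is $S(r,f)$ by the classical lemma on the logarithmic derivative. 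Adding these three contributions yields $m(r,\Delta_\eta^2 G_j/G_j')=S(r,f)$, and chaining the inequalities above gives the asserted bound, valid outside the union of the exceptional sets of finite logarithmic measure inherited from Theorem \ref{t1.1} and from the logarithmic derivative lemma.

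The main obstacle is precisely the estimate $m(r,\Delta_\eta^2 G_j/G_j')=S(r,f)$; everything hinges on spotting the telescoping factorization whose three factors are each directly controlled --- two by Theorem \ref{t1.1} (one application to $f$, one to the auxiliary function $\Delta_\eta G_j$) and one by the logarithmic derivative lemma. Once the factorization is in hand the rest is routine: the pole-separation lemma, the first main theorem for $\Delta_\eta^2 F$, and the trivial bound $m(r,f-a_j')\le m(r,f)+O(1)$. A minor technical point to verify is that $\Delta_\eta G_j$ is non-constant (which follows from $\Delta_\eta^2 F\not\equiv 0$), so that Theorem \ref{t1.1} may indeed be applied to it.
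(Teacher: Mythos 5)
Your overall strategy matches the paper's: use the pole-separation estimate, insert $\Delta_\eta^2 F$ as the substitute for $f'$, invoke the first main theorem, and reduce everything to showing $m(r,\Delta_\eta^2 G_j/G_j')=S(r,f)$. Where you differ is in that last step. The paper does not telescope; it instead asserts, as an analogue of \cite[Corollary 2.5]{asikainen2023new}, that $m\bigl(r,\Delta_\eta^{n+k-1}f/f^{(n)}\bigr)=S(r,f')$ and applies it with $n=1$, $k=2$ to $F$ and to each $G_j$. Your telescoping $\Delta_\eta^2 G_j/G_j'=\bigl(\Delta_\eta^2F/\Delta_\eta f\bigr)\cdot\bigl(\Delta_\eta f/f'\bigr)\cdot\bigl(f'/(f-c_j)\bigr)$ is a perfectly reasonable way to unpack that higher-order lemma, and it has the pedagogical advantage of making visible the three ingredients (two applications of a shift/derivative quotient estimate plus one classical logarithmic-derivative estimate) that the paper leaves implicit.

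There is, however, a genuine gap in how you control the first factor. You apply Theorem~\ref{t1.1} directly to the auxiliary function $\Delta_\eta G_j$, treating its conclusion $\lim_{r\to\infty}m_\eta(\cdot)=0$ as if it were available for this function. But Theorem~\ref{t1.1} is a statement about a \emph{fixed} meromorphic function with a shrinking shift $\eta=\eta(r)$, and here the function $\Delta_\eta G_j=\Delta_\eta F$ itself varies with $\eta$ (indeed it degenerates to $0$ as $\eta\to 0$), so the qualitative limit statement cannot be invoked for it. The paper circumvents exactly this difficulty by first extracting, from the Poisson--Jensen machinery underlying Theorem~\ref{t1.1}, the quantitative bound
$$
m\!\left(r,\frac{\Delta_\eta g}{g'}\right)\le\frac{64|\eta|^\delta(\alpha+1)}{\delta(1-\delta)(\alpha-1)\,r^{\delta/2}}\,T\bigl(\alpha(r+|\eta|),g'\bigr),
$$
valid uniformly for any non-constant meromorphic $g$, and then choosing $\alpha=1+|\eta|^\delta$ together with the explicit shrinkage condition $|\eta|<1/(2rT(r,g'))$ to turn this into an $S(r,g')$ bound via the Borel lemma. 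It is precisely the \emph{uniformity in $g$} of this quantitative estimate that makes it legitimate to feed in the $\eta$-dependent function $\Delta_\eta G_j$, and this is what your argument omits. To repair your proof you would replace each invocation of ``Theorem~\ref{t1.1} makes its proximity function small'' with this uniform quantitative estimate, verify that the shrinkage condition on $\eta$ can be chosen to accommodate both $g=f$ and $g=\Delta_\eta G_j$ simultaneously (using $T(r,\Delta_\eta f)\le 2T(r+|\eta|,f)+O(1)$), and collect the resulting finite-logarithmic-measure exceptional sets. The remaining technical point you flag, namely that $\Delta_\eta G_j$ is non-constant when $\Delta_\eta^2 F\not\equiv 0$, is fine and, for $\eta$ small, follows from $\Delta_\eta^2 F/\eta^2\to f'\not\equiv 0$.
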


\begin{proof}
    From the proof of Theorem \ref{t1.1}, we get for meromorphic, non-constant $g$ for sufficiently large $r>0$,
\begin{align*}
m\left(r,\frac{\Delta_\eta g}{g'}\right)
\leq& \frac{4|\eta|^\delta(\alpha+1)}{(1-\delta)(\alpha-1)r^\delta}(T(\alpha(r+|\eta|),g')+O(1))\\
&+\frac{4|\eta|^{\delta}}{\delta(1-\delta)}\frac{\alpha}{\alpha-1}\frac{1}{r^\delta}T(\alpha(r+|\eta|),g')\\
&+\left(\frac{|\eta|+\epsilon}{\epsilon}\right)^{\delta}\frac{4|\eta|^\delta}{\delta(1-\delta)}\frac{\alpha}{\alpha-1}\frac{1}{r^\delta}T(\alpha(r+|\eta|),g')\\
&+\frac{2|\eta|^{\delta/2}}{\delta}\frac{1}{1-\delta}\frac{1}{r^{\delta/2}}\biggl(n(s,g')+n(s,1/g')\biggl)\\
&+\frac{|\eta|^\delta}{\delta}\left(\frac{2}{\sqrt{\epsilon(2-\epsilon)}}\right)^\delta\frac{1}{1-\delta}\frac{1}{r^{\delta}}\biggl(n(s,g')+n(s,1/g')\biggl)\\
&\leq\frac{4|\eta|^\delta(\alpha+1)}{\delta(1-\delta)(\alpha-1)r^{\frac{\delta}{2}}}\left(5+\left(\frac{|\eta|+\epsilon}{\epsilon}\right)^{\delta}\right)\biggl(T(\alpha(r+|\eta|),g')+O(1)\biggl)\\
&\leq\frac{64|\eta|^\delta(\alpha+1)}{\delta(1-\delta)(\alpha-1)r^{\delta/2}}T(\alpha(r+|\eta|),g')
\end{align*}

If $\alpha=1+|\eta|^\delta$ and 
$$|\eta|<\frac{1}{2rT(r,g')},$$ 
then by Borel lemma for sufficiently large $r$ outside an exceptional set of finite linear measure, we obtain
$$m\left(r,\frac{\Delta_\eta f - a\eta}{f'-a}\right)\leq\frac{512}{\delta(1-\delta)}\frac{T(r,f')}{r^{\delta/2}}=S(r,f^\prime).$$
Next, we can get the following estimate similar to \cite[Corollary 2.5]{asikainen2023new},
$$
m\left(r,\frac{\Delta_\eta^nf-a\eta^n}{f^{(n)}-a}\right)=S(r,f^\prime).
$$
Moreover, if $\Delta_\eta^{n+k-1}f$ is non-constant for $k\in\mathbb{N}$, then
$$
m\left(r,\frac{\Delta_\eta^{n+k-1}f}{f^{(n)}}\right)=S(r,f^\prime).
$$
The proof only requires a small change of signs, so we omit it here.

Therefore, if we assume that a meromorphic function $f$ satisfying the conditions of Theorem \ref{t1.1} possesses a meromorphic primitive $F$, $a_j$ are distinct linear functions of $z$ and their derivatives are also distinct, we will obtain
$$
\begin{aligned}
    \sum_{j=1}^qm\left(r,\frac{1}{f-a_j^\prime}\right)&\le m\left(r, \frac{1}{\Delta_\eta^2F}\right)+m\left(r,\sum_{j=1}^q\frac{\Delta_\eta^2F}{f-a_j^\prime}\right)+O(1)\\
    &\le T(r,\Delta_\eta^2F)-N\left(r, \frac{1}{\Delta_\eta^2F}\right)+S(r,f)\\
    &\le m(r,f)+m\left(r, \frac{\Delta_\eta^2F}{f}\right)+N(r,\Delta_\eta^2F)\\
    &\quad-N\left(r, \frac{1}{\Delta_\eta^2F}\right)+S(r,f)\\
    %&\le m(r,f)+m\left(r, \frac{\Delta_\eta^2F}{f^\prime}\right)+m\left(r, \frac{f^\prime}{f^\prime}\right)+N(r,\Delta_\eta^2F)\\
    %&\quad-N\left(r, \frac{1}{\Delta_\eta^2F}\right)+S(r,f)\\
    &=m(r,f)+N(r,\Delta_\eta^2F)-N\left(r, \frac{1}{\Delta_\eta^2F}\right)+S(r,f),
\end{aligned}
$$
outside an exceptional set of finite logarithmic measure.
\end{proof}

\bibliographystyle{plain}
%\bibliography{ref}

\begin{thebibliography}{10}

\bibitem{asikainen2023new}
L.~Asikainen, J.~M. Huusko, and R.~Korhonen.
\newblock A new proximity function estimate on the quotient of the difference and the derivative of a meromorphic function.
\newblock {\em arXiv preprint arXiv:2306.06729}, 2023.

\bibitem{MR2296397}
W.~Bergweiler and J.~K. Langley.
\newblock Zeros of differences of meromorphic functions.
\newblock {\em Math. Proc. Cambridge Philos. Soc.}, 142(1):133--147, 2007.

\bibitem{MR1831783}
W.~Cherry and Z.~Ye.
\newblock {\em Nevanlinna's theory of value distribution}.
\newblock Springer Monogr. Math. Springer-Verlag, Berlin, 2001.
\newblock The second main theorem and its error terms.

\bibitem{cherry2013nevanlinna}
W.~Cherry and Z.~Ye.
\newblock {\em Nevanlinna's theory of value distribution}.
\newblock Springer Science \& Business Media, 2013.

\bibitem{MR2491899}
Y.~M. Chiang and S.~J. Feng.
\newblock On the growth of logarithmic differences, difference quotients and logarithmic derivatives of meromorphic functions.
\newblock {\em Trans. Amer. Math. Soc.}, 361(7):3767--3791, 2009.

\bibitem{MR3695346}
Y.~M. Chiang and X.~D. Luo.
\newblock Difference {N}evanlinna theories with vanishing and infinite periods.
\newblock {\em Michigan Math. J.}, 66(3):451--480, 2017.

\bibitem{MR2220338}
Y.~M. Chiang and S.~N.~M. Ruijsenaars.
\newblock On the {N}evanlinna order of meromorphic solutions to linear analytic difference equations.
\newblock {\em Stud. Appl. Math.}, 116(3):257--287, 2006.

\bibitem{MR220938}
F.~Gross.
\newblock On the distribution of values of meromorphic functions.
\newblock {\em Trans. Amer. Math. Soc.}, 131:199--214, 1968.

\bibitem{MR3206459}
R.~Halburd, R.~Korhonen, and K.~Tohge.
\newblock Holomorphic curves with shift-invariant hyperplane preimages.
\newblock {\em Trans. Amer. Math. Soc.}, 366(8):4267--4298, 2014.

\bibitem{MR2248826}
R.~G. Halburd and R.~J. Korhonen.
\newblock Nevanlinna theory for the difference operator.
\newblock {\em Ann. Acad. Sci. Fenn. Math.}, 31(2):463--478, 2006.

\bibitem{MR0164038}
W.~K. Hayman.
\newblock {\em Meromorphic functions}, volume~78 of {\em Oxford Mathematical Monographs}.
\newblock Clarendon Press, Oxford, 1964.

\bibitem{MR2526825}
R:~Korhonen.
\newblock An extension of {P}icard's theorem for meromorphic functions of small hyper-order.
\newblock {\em J. Math. Anal. Appl.}, 357(1):244--253, 2009.

\end{thebibliography}
\end{document}